\pretocmd{\chapter}{\addtocontents{toc}{\protect\addvspace{15\p@}}}{}{}
\pretocmd{\section}{\addtocontents{toc}{\protect\addvspace{3\p@}}}{}{}
\def\@tocline#1#2#3#4#5#6#7{\relax
  \ifnum #1>\c@tocdepth 
  \else
    \par \addpenalty\@secpenalty\addvspace{#2}%
    \begingroup \hyphenpenalty\@M
    \@ifempty{#4}{%
      \@tempdima\csname r@tocindent\number#1\endcsname\relax
    }{%
      \@tempdima#4\relax
    }%
    \parindent\z@ \leftskip#3\relax \advance\leftskip\@tempdima\relax
    \rightskip\@pnumwidth plus4em \parfillskip-\@pnumwidth
    #5\leavevmode\hskip-\@tempdima
      \ifcase #1
       \or\or \hskip .5em \or \hskip 1em \else \hskip 1.5em \fi%
      #6\nobreak\relax
    \dotfill\hbox to\@pnumwidth{\@tocpagenum{#7}}\par
    \nobreak
    \endgroup
  \fi}
\newcommand{\C}{\mathbb{C}}
\newcommand{\N}{\mathbb{N}}
\newcommand{\Z}{\mathbb{Z}}
\newcommand{\Q}{\mathbb{Q}}
\newcommand{\F}{\mathbb{F}}
\newcommand{\A}{\mathbb{A}}
\newcommand{\Gal}{\operatorname{Gal}}
\renewcommand{\ss}{\operatorname{ss}}
\newcommand{\der}{\operatorname{der}}
\newcommand{\End}{\operatorname{End}}
\newcommand{\conn}{\operatorname{conn}}
\newcommand{\GL}{\mathrm{GL}}
\newcommand{\GSp}{\mathrm{GSp}}
\def\bG{\mathbf{G}}
\def\bT{\mathbf{T}}
\newcommand\uC{\underline{C}}
\newcommand\uG{\underline{G}}
\newcommand\uT{\underline{T}}
\newcommand\uZ{\underline{Z}}
\theoremstyle{plain}
\newtheorem{thm}{Theorem}[section]
\newtheorem{prop}[thm]{Proposition}
\newtheorem{lem}[thm]{Lemma}
\newtheorem{cor}[thm]{Corollary}
\newtheorem{remark}[thm]{Remark}
\begin{document}
	
	\title{Comparison of component groups of $\ell$-adic and mod $\ell$ monodromy groups}
	
	\author{Boyi Dai}
	\address{Department of Mathematics, HKU, Pokfulam, Hong Kong}
	\email{daiboy@connect.hku.hk}
	
	\author{Chun-Yin Hui}
\address{Department of
 Mathematics, The University of Hong
 Kong, Pokfulam, Hong Kong}
\email{pslnfq@gmail.com}

\begin{abstract}
Let $\{\rho_\ell:\Gal_K\to \GL_n(\Q_\ell)\}_\ell$
be a semisimple compatible system of $\ell$-adic representations of a number field $K$
that is arising from geometry.
Let $\bG_\ell\subset\GL_{n,\Q_\ell}$ and $\widehat\uG_\ell\subset\GL_{n,\F_\ell}$ 
be respectively the algebraic monodromy group and the
full algebraic envelope of $\rho_\ell$. We prove that there is a natural 
isomorphism between the component groups 
$\pi_0(\bG_\ell)$ and $\pi_0(\widehat\uG_\ell)$
for all sufficiently large $\ell$.
\end{abstract}

	\maketitle
	
	
\section{Introduction}
\subsection{Compatible systems arising from geometry}
	Let $K$ be a number field, $\overline K$ be an algebraic closure of $K$,
	and $\Gal_K$ be the absolute Galois group $\Gal(\overline K/K)$.
	Denote by $\Sigma_K$ the set of finite places of $K$ and 
	by $S_\ell\subset\Sigma_K$ the subset of places of $K$ that divide a rational prime $\ell$.
A family of (continuous) $n$-dimensional $\ell$-adic representations 
\begin{equation*}\label{cs}
\{\rho_\ell:\Gal_K\to \GL_n(\Q_\ell)\}_\ell
\end{equation*}
of $K$ indexed by the set of rational primes $\ell$, is said to be a 
\emph{(Serre) compatible system} \cite[Chap. 1]{Se98},
if there is a finite subset $S\subset\Sigma_K$ and for 
each $v\in \Sigma_K\backslash S$ a polynomial $P_v(T)\in\Q[T]$
such that the following conditions hold for all $\ell$:
\begin{enumerate}[(a)]
\item the representation $\rho_\ell$ is unramified outside $S\cup S_\ell$ and
\item for all $v\in \Sigma_K\backslash (S\cup S_\ell)$ 
the characteristic polynomial of $\rho_\ell(Frob_v)$ is equal to $P_v(T)$, where $Frob_v$ denotes the Frobenius class at $v$.
\end{enumerate}
A basic source of compatible systems is from geometry.
Let $X$ be a smooth projective variety defined over $K$ and $w\in\Z_{\geq0}$. 
For every prime $\ell$, The $\ell$-adic \'etale cohomology group 
$V_\ell:=H^w_{\text{\'et}}(X_{\overline K},\Q_\ell)$ is a $\Gal_K$-representation,
which is conjectured to be semisimple by Grothendieck-Serre (see \cite{Ta65}). 
Deligne \cite{De74} proved that the family $\{V_\ell\}_\ell$ is a compatible system.
In this article, a compatible system 
$\{\rho_\ell\}_\ell$ is said to be \emph{semisimple} if each $\rho_\ell$ is semisimple;
$\{\rho_\ell\}_\ell$ is said to be \emph{arising from geometry} if there exist 
 smooth projective varieties $X_1,...,X_k$ defined over $K$, $w_1,...,w_k\in\Z_{\geq0}$, and $m_1,...,m_k\in\Z$ such that $\rho_\ell$ is a subquotient of 
\begin{equation*}\label{Uell}
U_\ell:=\bigoplus_{1\leq i\leq k} H^{w_i}_{\text{\'et}}(X_{i,\overline K},\Q_\ell(m_i))
\end{equation*}
for all $\ell$, where $\Q_\ell(m_i)$ denotes the $m_i$th Tate twist. 

\subsection{Algebraic monodromy groups and full algebraic envelopes}
Given a compatible system $\{\rho_\ell\}_\ell$, we define $\bar\rho_\ell^{\ss}:\Gal_K\to\GL_n(\F_\ell)$ 
to be the semisimplification of the reduction modulo $\ell$ of $\rho_\ell$.
Denote by $\Gamma_\ell\subset\GL_n(\Q_\ell)$ the image of $\rho_\ell$
(i.e., \emph{$\ell$-adic monodromy group}),
 by $\bG_\ell\subset\GL_{n,\Q_\ell}$ the the Zariski closure of $\Gamma_\ell$ in $\GL_{n,\Q_\ell}$
(i.e., \emph{algebraic monodromy group}), and by
$\bar\Gamma_\ell\subset\GL_n(\F_\ell)$ the image of $\bar\rho_\ell^{\ss}$  
(i.e., \emph{mod $\ell$ monodromy group}).
It is interesting to study the families $\{\Gamma_\ell\}_\ell$, $\{\bG_\ell\}_\ell$, 
and $\{\bar\Gamma_\ell\}_\ell$ of groups. 
The following is obtained by Serre.

\begin{thm}\label{Serre1} \cite{Se81}
Let $\{\rho_\ell:\Gal_K\to\GL_n(\Q_\ell)\}_\ell$ be a semisimple compatible system. 
\begin{enumerate}[(i)]
\item The finite Galois extension $K^{\conn}/K$ corresponding to the morphism 
$$\Gal_K\stackrel{\rho_\ell}{\longrightarrow}\bG_\ell(\Q_\ell)\to \bG(\Q_\ell)/\bG_\ell^\circ(\Q_\ell)$$
is independent of $\ell$. In particular, the component group 
$\pi_0(\bG_\ell):=\bG_\ell/\bG_\ell^\circ$ is isomorphic to $\Gal(K^{\conn}/K)$ for all $\ell$.
\item The formal character of $\bG_\ell\subset\GL_{n,\Q_\ell}$ is independent of $\ell$.
In particular, the rank of $\bG_\ell$ is independent of $\ell$.
\end{enumerate}
\end{thm}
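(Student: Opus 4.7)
The plan is to use the only $\ell$-independent datum available — the family of Frobenius characteristic polynomials $\{P_v(T)\}_{v\in\Sigma_K\setminus S}$ — and extract information about $\bG_\ell$ via the Chebotarev density theorem combined with Serre's theory of \emph{Frobenius tori}. I would prove (ii) first, as the same construction feeds into (i).

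For each $v\notin S\cup S_\ell$ let $\Phi_{v,\ell}$ denote the semisimple part of $\rho_\ell(\mathrm{Frob}_v)$ and let $T_{v,\ell}\subset\bG_\ell^\circ$ be the identity component of the Zariski closure of $\langle\Phi_{v,\ell}\rangle$. By a theorem of Serre, $T_{v,\ell}$ is a maximal torus of $\bG_\ell^\circ$ for a set of $v$ of Dirichlet density one. For such $v$, evaluation at $\Phi_{v,\ell}$ identifies the multiset of weights of $T_{v,\ell}$ on $\Q_\ell^n$ — which, up to the Weyl group action, is the formal character of $\bG_\ell^\circ\subset\GL_{n,\Q_\ell}$ — with the multiset of eigenvalues $\{\alpha_1(v),\ldots,\alpha_n(v)\}$, i.e., the roots of $P_v(T)$. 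Since $P_v$ is $\ell$-independent, so are the formal character and its dimension $\dim T_{v,\ell}=\rk(\bG_\ell)$, proving (ii).

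For (i), the Chebotarev density theorem reduces the claim to showing that
\[ V_\ell=\bigl\{v\notin S\cup S_\ell:\rho_\ell(\mathrm{Frob}_v)\in\bG_\ell^\circ(\Q_\ell)\bigr\} \]
is $\ell$-independent up to a set of density zero. For $v$ with $T_{v,\ell}$ maximal (density one) one has $\bG_\ell^\circ\cap\overline{\langle\Phi_{v,\ell}\rangle}=T_{v,\ell}$; since unipotent parts of elements of $\bG_\ell$ automatically lie in $\bG_\ell^\circ$, the condition $v\in V_\ell$ is equivalent to $\overline{\langle\Phi_{v,\ell}\rangle}$ being connected, equivalently to the character group $\Z^n/L_v$ being torsion-free, where
\[ L_v=\bigl\{(m_1,\ldots,m_n)\in\Z^n:\alpha_1(v)^{m_1}\cdots\alpha_n(v)^{m_n}=1\bigr\} \]
is the lattice of multiplicative relations among the Frobenius eigenvalues. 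Since $L_v$ depends only on $P_v$, the set $V_\ell$ agrees with $V_{\ell'}$ on a density-one set, and Chebotarev then forces $K^{\conn}_\ell=K^{\conn}_{\ell'}$; the natural isomorphism $\pi_0(\bG_\ell)\simeq\Gal(K^{\conn}/K)$ is built into the definition of $K^{\conn}_\ell$ as the fixed field of $\ker(\Gal_K\to\pi_0(\bG_\ell))$.

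The main obstacle is ensuring that the Frobenius torus argument really captures the component structure of $\bG_\ell$: one needs $T_{v,\ell}$ to be a \emph{maximal} torus of $\bG_\ell^\circ$ (not merely a subtorus) in order to identify $\bG_\ell^\circ\cap\overline{\langle\Phi_{v,\ell}\rangle}$ with $T_{v,\ell}$ and reduce the membership test to a multiplicative condition on the $\alpha_i(v)$. This is precisely the content of Serre's Frobenius torus theorem, whose proof rests on a delicate analysis of how the Zariski closures of cyclic Frobenius subgroups vary with $v$.
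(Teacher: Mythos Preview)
The paper does not supply its own proof of this theorem; it is quoted from \cite{Se81}.  However, Serre's essential tools for part~(i) are reproduced in \S3 as Lemma~\ref{poly} and Proposition~\ref{density}, and those tools point to a route different from yours.

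Serre's argument for (i), as reflected in the paper, does \emph{not} pass through Frobenius tori.  Instead one observes (Lemma~\ref{poly}) that for every $g\notin\bG_\ell^\circ$ there is an \emph{integer} polynomial $f$ in the characteristic-polynomial coefficients $\alpha_1,\dots,\alpha_n$ which vanishes identically on $\chi(g\bG_\ell^\circ)$ but has $f(\chi(\mathrm{id}))\neq 0$.  Combined with the density statement (Proposition~\ref{density}) and the compatibility of the $P_v(T)$, this forces the identity component to be cut out by the same polynomial conditions for every $\ell$: if some $\sigma\in\Gal(K^{\conn}_\ell/K)$ restricted trivially to $K^{\conn}_{\ell'}$, then the associated $f$ would vanish on a positive-density set of Frobenii landing in $\bG_{\ell'}^\circ$, hence identically on $\bG_{\ell'}^\circ$, contradicting $f(\chi(\mathrm{id}))\neq 0$.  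No maximality of Frobenius tori is needed.

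Your approach to (i) hinges on the claim that $T_{v,\ell}$ is a maximal torus of $\bG_\ell^\circ$ for a density-one set of $v$.  This is exactly the hypothesis the paper isolates as MFT in \S2.5, and the paper is explicit that it is established only under the additional assumption that the system arises from geometry (so that Frobenius eigenvalues are Weil numbers and one can control their multiplicative relations).  The condition ``$t$ topologically generates a maximal torus'' is \emph{not} Zariski-open on $\bG_\ell$, so a naked Chebotarev argument does not give it; without MFT your equivalence ``$v\in V_\ell\Leftrightarrow \Z^n/L_v$ torsion-free'' is unavailable, and the proof of (i) as stated (for an arbitrary semisimple compatible system) breaks down.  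For (ii) your Frobenius-torus strategy is indeed the standard one and is essentially what Serre does, but the same caveat about MFT applies: you should either restrict to systems of geometric origin or supply an independent argument that the maximal Frobenius-torus rank is attained on a positive-density set.
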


Suppose the compatible system is 
given by $\{V_\ell:=H^w_{\text{\'et}}(X_{\overline K},\Q_\ell)\}_\ell$ for some 
smooth projective variety $X/K$ as above. 
It is conjectured in \cite[$\mathsection\mathsection$ 9-10]{Se94} that 
the algebraic monodromy group $\bG_\ell$ 
is \emph{independent of $\ell$} and the compact $\ell$-adic Lie group 
$\Gamma_\ell$ is \emph{large} in $\bG_\ell(\Q_\ell)$
in the sense that there exist a  constant $C>0$ and a
 reductive group $G$ defined over $\Q$ together with isomorphisms
$i_\ell:\bG_\ell\stackrel{\simeq}{\rightarrow} G\times_\Q\Q_\ell$ for all $\ell$ such that  the index  
$[G(\Z_\ell): i_\ell(\Gamma_\ell)]\leq C$ for all sufficiently large $\ell$.
A well-known result on these conjectures
is the following theorem of Serre\footnote{When $g=1$ (the elliptic curve case), the result is obtained in \cite{Se72}.}. 

\begin{thm}\cite{Se85,Se86}\label{Serre2}
Let $A$ be a $g$-dimensional abelian variety defined over $K$ such that $\End(A_{\overline K})=\Z$ and 
 either $g$ is odd or $g\in\{2,6\}$. Let $\{\rho_\ell:\Gal_K\to\GL_{2g}(\Q_\ell)\}_\ell$
be the compatible system attached to $\ell$-adic Tate modules of $A$. Then the following assertions hold.
\begin{enumerate}[(i)]
\item The algebraic monodromy group $\bG_\ell \simeq \GSp_{2g,\Q_\ell}$ for all $\ell$.
\item The $\ell$-adic monodromy group $\Gamma_\ell$ is an open subgroup in $\GSp_{2g}(\Q_\ell)$ 
for all $\ell$ and is equal to $\GSp_{2g}(\Z_\ell)$ 
for sufficiently large $\ell$.
\end{enumerate}
\end{thm}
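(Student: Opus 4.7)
The inclusion $\bG_\ell \subseteq \GSp_{2g,\Q_\ell}$ is automatic: fixing a polarization of $A$ produces a Weil pairing $V_\ell \times V_\ell \to \Q_\ell(1)$ that is $\Gal_K$-equivariant up to the cyclotomic character, so $\Gamma_\ell$ preserves a symplectic form up to scalar. The whole content of (i) is the reverse inclusion, which I would attack in two stages before cleaning up (ii).

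\emph{Stage 1 (reductivity, rank, irreducibility).} By Faltings' theorems, $V_\ell$ is $\Gamma_\ell$-semisimple and $\End_{\Gal_K}(V_\ell) = \End(A_{\overline K}) \otimes \Q_\ell = \Q_\ell$. Hence $\bG_\ell$ is reductive and $V_\ell$ is an absolutely irreducible $\bG_\ell$-module with trivial commutant. Using Chebotarev together with Weil's bounds I would exhibit a Frobenius $Frob_v$ whose semisimple image on $V_\ell$ has $2g$ pairwise distinct eigenvalues; together with Theorem \ref{Serre1}(ii) this forces $\bG_\ell^\circ$ to contain a maximal torus of $\GSp_{2g,\Q_\ell}$, hence $\rank \bG_\ell^\circ = g+1$.

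\emph{Stage 2 (classification).} I would then classify connected reductive subgroups $H \subseteq \GSp_{2g,\Q_\ell}$ of rank $g+1$ acting irreducibly and self-dually on the $2g$-dimensional standard representation. The principal alternative to $H = \GSp_{2g}$ is a tensor decomposition $V_\ell \simeq V_1 \otimes V_2$ coming from a factorization $2g = (2g_1)(2g_2)$; when $g$ is odd no nontrivial symplectic factorization of this shape exists, and a case analysis across the remaining Dynkin types, using the list of minuscule representations of dimension $2g$, rules out every other candidate. The cases $g=2$ and in particular $g=6$ require separate treatment — for $g=6$ one must exclude an exceptional triple tensor decomposition, using the hypothesis $\End(A_{\overline K})=\Z$ to forbid the extra endomorphisms such a factorization would produce. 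This representation-theoretic stage is the main obstacle; everything else is essentially formal once it is in place.

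\emph{Stage 3 (finishing and (ii)).} With $\bG_\ell^\circ = \GSp_{2g,\Q_\ell}$ squeezed inside $\bG_\ell \subseteq \GSp_{2g,\Q_\ell}$, the connectedness of $\GSp_{2g}$ gives $\bG_\ell = \GSp_{2g,\Q_\ell}$, proving (i). For (ii), $\Gamma_\ell$ is a compact $\ell$-adic Lie subgroup of $\GSp_{2g}(\Q_\ell)$ whose Zariski closure is $\GSp_{2g,\Q_\ell}$, so its Lie algebra is all of $\mathfrak{gsp}_{2g}(\Q_\ell)$ and $\Gamma_\ell$ is open. For $\ell \gg 0$ I would first show $\bar\Gamma_\ell = \GSp_{2g}(\F_\ell)$ by appealing to Nori's classification of subgroups of $\GL_n(\F_\ell)$ generated by their $\ell$-elements (which forces the mod-$\ell$ algebraic envelope to match $\GSp_{2g}$ over $\F_\ell$), and then lift to $\Z_\ell$ via the standard Frattini argument: the congruence kernel of $\GSp_{2g}(\Z_\ell) \to \GSp_{2g}(\F_\ell)$ is pro-$\ell$ with Frattini quotient the adjoint module $\mathfrak{gsp}_{2g}(\F_\ell)$, and the latter is irreducible under $\GSp_{2g}(\F_\ell)$ for $\ell$ large, so any closed subgroup surjecting mod $\ell$ must equal $\GSp_{2g}(\Z_\ell)$.
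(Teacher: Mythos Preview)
The paper does not prove this theorem; it is quoted as a result of Serre \cite{Se85,Se86} and used as motivation for the notion of algebraic envelope. The only proof-adjacent content in the paper is the paragraph immediately following the statement, which sketches Serre's strategy for (ii): build connected reductive $\uG_\ell\subset\GL_{2g,\F_\ell}$ with $\bar\Gamma_\ell$ and $\uG_\ell(\F_\ell)$ uniformly commensurate, and then identify $\uG_\ell$ with $\GSp_{2g,\F_\ell}$ for $\ell\gg0$. Your Stage~3 is exactly this --- Nori's construction \emph{is} the algebraic envelope, and your Frattini/lifting step is the passage from $\bar\Gamma_\ell=\GSp_{2g}(\F_\ell)$ to $\Gamma_\ell=\GSp_{2g}(\Z_\ell)$ --- so on the part the paper actually discusses, your plan matches.

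Your overall outline is a faithful reconstruction of Serre's argument, but there is one genuine imprecision in Stage~1. A Frobenius with $2g$ pairwise distinct eigenvalues is regular semisimple in $\GL_{2g}$, but the Zariski closure of the cyclic group it generates can be a proper subtorus of its centralizer: distinct eigenvalues do not imply multiplicative independence, and Theorem~\ref{Serre1}(ii) (independence of the formal character) does not supply the missing rank. What is actually needed --- and what Serre proves using the Weil bounds together with an argument on the archimedean absolute values of Weil $q$-numbers --- is that for a positive-density set of $v$ the only multiplicative relations among the Frobenius eigenvalues are those forced by the pairing $\lambda_i\lambda_{\sigma(i)}=q_v$, so that the Frobenius torus $\bT_{\bar v}$ already has rank $g+1$. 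You invoke ``Weil's bounds'', so the right ingredient is on the table, but the inference as written (distinct eigenvalues $\Rightarrow$ $\bG_\ell^\circ$ contains a maximal torus of $\GSp_{2g}$) is not valid and should be replaced by the multiplicative-independence step.
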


Here $\rho_\ell$ is equivalent to the dual representation of $H^1_{\text{\'et}}(A_{\overline K},\Q_\ell)$.
To establish Theorem \ref{Serre2}(ii) (largeness of monodromy), 
Serre first constructed some connected reductive 
subgroup $\uG_\ell\subset\GL_{2g,\F_\ell}$ for all $\ell\gg0$ 
such that the mod $\ell$ monodromy $\bar\Gamma_\ell$ and $\uG_\ell(\F_\ell)$, as subgroups of $\GL_{2g}(\F_\ell)$,
are commensurate (uniformly independent of $\ell$); and then to show that $\uG_\ell=\GSp_{2g,\F_\ell}$ 
(same root datum as $\bG_\ell$) for all $\ell\gg0$. 
These connected $\F_\ell$-reductive subgroups $\uG_\ell\subset\GL_{2g,\F_\ell}$
are called the \emph{algebraic envelopes} of $\{\rho_\ell\}_\ell$ 
and should conjecturally be given as the reduction modulo $\ell$ of $G^\circ/\Q$. 
They are useful algebraic tools for studying the mod $\ell$ monodromy groups $\bar\Gamma_\ell\subset\GL_n(\F_\ell)$.
We remark that in the seminal work \cite{LP11}, Larsen-Pink developed an algebro-geometric approach to study finite subgroups of $\GL_n(\overline\F_\ell)$.

For compatible systems $\{\rho_\ell\}_\ell$ satisfying certain local conditions (e.g., those arising from geometry),
we constructed algebraic envelopes $\uG_\ell\subset\GL_{n,\F_\ell}$  for $\ell\gg0$
with many nice properties  
and obtained some $\ell$-independence results for them \cite{Hu15,Hu23} 
(see Theorem \ref{Hui2} for details). 
Since the algebraic envelopes $\uG_\ell$ are $\F_\ell$-analogues of the identity components of 
the algebraic monodromy groups $\bG_\ell^\circ$, it is natural to define 
the \emph{full algebraic envelope} to be
\begin{equation}\label{fullae}
\widehat{\uG}_\ell:= \bar\Gamma_\ell\cdot\uG_\ell\subset\GL_{n,\F_\ell}
\end{equation}
(analogous to $\bG_\ell\subset\GL_{n,\Q_\ell}$) for $\ell\gg0$.
The closed subgroup $\widehat{\uG}_\ell\subset \GL_{n,\F_\ell}$ is well-defined for $\ell\gg0$ 
as $\bar\Gamma_\ell$ normalizes $\uG_\ell$, see the proof of \cite[Proposition 3.14]{Hu23}.
It follows that the identity component of $\widehat{\uG}_\ell$ is $\uG_\ell$
and the index $[\widehat{\uG}_\ell(\F_\ell):\bar\Gamma_\ell]$ is bounded by a constant independent of $\ell$.

\subsection{Main results}
The goal of this article is to prove an analogue of Theorem \ref{Serre1}(i) for the component groups of
the full algebraic envelopes $\widehat{\uG}_\ell$ defined in \eqref{fullae}.

\begin{thm}\label{main}
Let $\{\rho_\ell:\Gal_K\to\GL_n(\Q_\ell)\}_\ell$ be a semisimple compatible system
that is arising from geometry, with algebraic monodromy groups $\{\bG_\ell\}_\ell$ 
and full algebraic envelopes $\{\widehat{\uG}_\ell\}_{\ell\gg0}$.
Let $K^{\conn}/K$ be the finite Galois extension corresponding to $\bG_\ell/\bG_\ell^\circ$ 
which is independent of $\ell$.
For all sufficiently large $\ell$, the finite Galois extension corresponding to the morphism 
$$\Gal_K\stackrel{\bar\rho_\ell^{\ss}}{\longrightarrow}\widehat{\uG}_\ell(\F_\ell)\to 
\widehat{\uG}_\ell(\F_\ell)/\uG_\ell(\F_\ell)$$
is $K^{\conn}/K$. In particular, the component groups 
$\pi_0(\bG_\ell)=\bG_\ell/\bG_\ell^\circ$ and $\pi_0(\widehat{\uG}_\ell)=\widehat{\uG}_\ell/\uG_\ell$ 
are naturally isomorphic for all $\ell\gg0$.
\end{thm}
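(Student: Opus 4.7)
The plan is to identify the kernel of the composite morphism
\[
\phi_\ell:\Gal_K\xrightarrow{\bar\rho_\ell^{\ss}}\widehat{\uG}_\ell(\F_\ell)\twoheadrightarrow \widehat{\uG}_\ell(\F_\ell)/\uG_\ell(\F_\ell)
\]
with the subgroup $\Gal_{K^{\conn}}\subset\Gal_K$. By Theorem~\ref{Serre1}(i) the analogous $\ell$-adic composite into $\pi_0(\bG_\ell)$ already has kernel exactly $\Gal_{K^{\conn}}$, so once $\ker(\phi_\ell)=\Gal_{K^{\conn}}$ is established the natural isomorphism $\pi_0(\bG_\ell)\simeq \pi_0(\widehat{\uG}_\ell)$ will follow for all $\ell\gg 0$. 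The theorem then reduces to the two inclusions below.

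\emph{For $\Gal_{K^{\conn}}\subset\ker(\phi_\ell)$.} The restriction $\{\rho_\ell|_{\Gal_{K^{\conn}}}\}_\ell$ is again a semisimple compatible system arising from geometry: the same varieties $X_i$ base-changed to $K^{\conn}$ furnish its geometric origin, and by Theorem~\ref{Serre1}(i) its algebraic monodromy group at $\ell$ is the connected group $\bG_\ell^\circ$. The plan here is to invoke the algebraic envelope construction of \cite{Hu15,Hu23} for this restricted system; I expect it to behave functorially under restriction to a finite-index subgroup of $\Gal_K$, so for $\ell\gg 0$ the resulting envelope coincides with $\uG_\ell$ (possibly after passing to a common subgroup of bounded index). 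In particular $\bar\rho_\ell^{\ss}(\Gal_{K^{\conn}})\subset\uG_\ell(\F_\ell)$.

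\emph{For $\ker(\phi_\ell)\subset\Gal_{K^{\conn}}$.} Equivalently, for each $\gamma\in\Gal_K\setminus\Gal_{K^{\conn}}$ one must show $\bar\rho_\ell^{\ss}(\gamma)\notin\uG_\ell(\F_\ell)$ for $\ell\gg 0$. Since $\Gal(K^{\conn}/K)$ is a fixed finite group it suffices to handle a single representative per non-trivial coset, so by Chebotarev I would pick $\gamma=Frob_v$ at some good unramified prime $v\notin S$ lying in the given coset. By compatibility, the characteristic polynomial $P_v(T)\in\Q[T]$ of $\rho_\ell(Frob_v)$ and, more generally, the value at $\rho_\ell(Frob_v)$ of any regular class function on $\GL_n$ defined over $\Q$ are independent of $\ell$. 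The idea is to exhibit a regular function on $\GL_n$ -- for instance, arising from the trace of a virtual representation of $\pi_0(\bG_\ell)$ that separates the coset of $\gamma$ from the identity -- which vanishes identically on $\bG_\ell^\circ$ but takes a nonzero $\ell$-independent rational value at $\rho_\ell(Frob_v)$. Reducing modulo $\ell$ and using that $\uG_\ell$ is (for $\ell\gg 0$) the special fibre of a suitable integral model of $\bG_\ell^\circ$, the reduction of this function vanishes on $\uG_\ell(\F_\ell)$ while remaining nonzero at $\bar\rho_\ell^{\ss}(Frob_v)$, yielding the required separation.

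The main obstacle is the precise integral comparison between the $\Q_\ell$-group $\bG_\ell^\circ$ and the $\F_\ell$-group $\uG_\ell$ for all sufficiently large $\ell$. Concretely, one needs a smooth integral model over a ring of $S$-integers (inverting the residue characteristics of the bad places) whose generic fibre is $\bG_\ell^\circ$ and whose special fibre is $\uG_\ell$, plus the analogous integral comparison of the ambient disconnected groups so that the same scheme-theoretic condition cuts out the identity component in both characteristics. Equally, one must verify that the algebraic envelope of \cite{Hu15,Hu23} is functorial under the finite base change $K\rightsquigarrow K^{\conn}$. Once these structural ingredients are in place, both containments reduce to Chebotarev together with compatibility, but extracting them from the definition of $\widehat{\uG}_\ell$ is where the bulk of the technical work will lie.
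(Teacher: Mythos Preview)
Your treatment of the inclusion $\Gal_{K^{\conn}}\subset\ker(\phi_\ell)$ is essentially the paper's Proposition~\ref{componentmap}: the paper makes it precise by noting that the uniqueness clause of Theorem~\ref{Hui2} forces the envelopes of the restricted system $\{\rho_\ell|_{\Gal_{K^{\conn}}}\}_\ell$ to equal $\uG_\ell$, and then invokes the MFT hypothesis (satisfied because the restricted system arises from geometry and has connected monodromy) together with Proposition~\ref{contain} to conclude $\bar\rho_\ell^{\ss}(\Gal_{K^{\conn}})\subset\uG_\ell(\F_\ell)$.

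The reverse inclusion is where your plan breaks. You propose to take a function vanishing on $\bG_\ell^\circ$, reduce it modulo $\ell$, and deduce that it vanishes on $\uG_\ell(\F_\ell)$ ``using that $\uG_\ell$ is (for $\ell\gg0$) the special fibre of a suitable integral model of $\bG_\ell^\circ$.'' That statement is precisely the open conjecture the paper is \emph{not} assuming; it is mentioned explicitly just before Proposition~\ref{finite} as only conjectural. There is no known integral group scheme over $\Z_\ell$ (let alone over $\Z[1/N]$) whose generic fibre is $\bG_\ell^\circ$ and whose special fibre is $\uG_\ell$, so nothing guarantees that a function vanishing on $\bG_\ell^\circ$ reduces to one vanishing on $\uG_\ell$. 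Your ``main obstacle'' is therefore not merely technical---it is the whole difficulty, and the paper's contribution is exactly to circumvent it.

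The paper's argument for the reverse inclusion (Proposition~\ref{mainprop}) is quite different. It constructs, via Lemma~\ref{poly}, a polynomial $f\in\Z[\alpha_1,\dots,\alpha_n]$ in the characteristic-polynomial coefficients that vanishes on a \emph{non-identity} component $g\bG_{\ell'}^\circ$ but is nonzero at the identity---the opposite sign convention from yours. By compatibility this yields a positive-density set of Frobenii on which $f\circ\chi$ vanishes, hence a positive proportion of $\bar\Gamma_\ell$ lies in the hypersurface $\uZ_\ell:=\uG_\ell\cap\{\overline{f\circ\chi}=0\}$. The point is that no comparison between $\bG_\ell^\circ$ and $\uG_\ell$ is needed: since $\overline{f\circ\chi}(\mathrm{id})\neq0$ for $\ell\gg0$, the set $\uZ_\ell$ is automatically a proper hypersurface of $\uG_\ell$. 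The substitute for an integral model is the finiteness result Proposition~\ref{finite}, which shows that the $\uG_{\ell,\overline\F_\ell}$ fall into finitely many $\GL_n$-conjugacy classes of reductions of $\Z[1/N]$-subgroup schemes; this gives a uniform bound on $\deg\uZ_\ell$, so Lang--Weil (Lemma~\ref{LW}) yields $|\uZ_\ell(\F_\ell)|\le C_2\,\ell^{\dim\uG_\ell-1}$. Combined with $|\bar\Gamma_\ell|\ge C_3^{-1}(\ell-1)^{\dim\uG_\ell}$ from Theorem~\ref{Hui2}(ii) and Lemma~\ref{gpbound}, this contradicts the positive-density statement for large~$\ell$. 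In short, the missing idea in your proposal is to replace the unavailable integral comparison by a point-counting argument whose uniformity is supplied by Proposition~\ref{finite}.
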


As an immediate consequence, 
the size of the subset of the mod $\ell$ monodromy $\bar\Gamma_\ell$ lying 
outside $\uG_\ell$ can be determined for $\ell\gg0$. 

\begin{cor}\label{maincor}
Let $\{\rho_\ell\}_\ell$ be the semisimple compatible system in Theorem \ref{main}.
For all sufficiently large $\ell$, the mod $\ell$ monodromy $\bar\Gamma_\ell$ satisfies
$$[\bar\Gamma_\ell: \bar\Gamma_\ell\cap \uG_\ell(\F_\ell)]=[K^{\conn}:K].$$
\end{cor}

\subsection{Connections to recent works}
Let $A$ be an abelian variety defined over a number field (or even a finitely generated field over $\Q$).
Since the works of Serre \cite{Se85,Se86},
the method of algebraic envelopes has been an important tool to study the Galois monodromy groups
of $A/K$, see \cite{Wi02},\cite{HL20},\cite{Zy22a,Zy22b} for $\ell$-adic monodromy when $\ell\gg0$
and \cite{Wa14},\cite{Ca15},\cite{LSTX19},\cite{Zy23} for adelic monodromy under specializations. 
This method is also used to study the Geyer-Jarden conjecture \cite{GJ78}
(resp. a conjecture of Hindry-Ratazzi \cite{HR12}), which concerns
the arithmetic of the torsion points of $A$ over large extensions (resp. finite extensions) of $K$,
see the works \cite{AGP13a,AGP13b},\cite{Zy16},\cite{JP19} 
(resp. \cite{HR10,HR12,HR16},\cite{LLZ23}).

As the (connected) algebraic envelopes $\uG_\ell$ only capture the mod $\ell$ monodromy groups  after a finite extension of $K$,
our main results on the full algebraic envelopes $\widehat{\uG}_\ell$
shall give a more refined study
of the mod $\ell$ monodromy groups of semisimple compatible systems arising from geometry.

\subsection{Outline of the article}
In section $2$, we describe some $\ell$-independence results 
for algebraic monodromy groups $\bG_\ell$ and algebraic envelopes $\uG_\ell$ in \cite{Hu13,Hu15,Hu23}
and prove a useful finiteness result for algebraic envelopes (Proposition \ref{finite}) that is of independent interest. 
Then we state a result (Proposition \ref{contain}) that allows us to reduce Theorem \ref{main}
to the disconnected case (Proposition \ref{mainprop}) in section $3$.
Proposition \ref{mainprop} will be proved  by combining the results in section $2$,
some ideas of Serre \cite{Se81}, and Lang-Weil \cite{LW54} to estimate the numbers of rational points
of some $\F_\ell$-subvarieties for all sufficiently large $\ell$.

\section{$\ell$-independence of algebraic monodromy and algebraic envelopes}
\subsection{Formal bi-characters for reductive groups}
Let $G$ be a reductive group defined over a field $F$.
Denote by $G^\circ$ the identity component of $G$ and 
by $G^{\der}$ the derived group $[G^\circ,G^\circ]$.
If $F'$ is a field extension of $F$, define $G_{F'}:=G\times_F F'$ the base change.

Let $G\subset\GL_{n,F}$ be a reductive subgroup. Suppose first $F$ is algebraically closed.
The \emph{formal character} of $G$ is the $\GL_{n,F}$-conjugacy class
of a maximal torus $T$ of $G$. The \emph{formal bi-character} of $G$ is the
$\GL_{n,F}$-conjugacy class of the chain $T'\subset T$ of subtori where 
$T$ is a maximal torus of $G$ and $T'$ is a maximal torus of $G^{\der}$.
For general $F$ with $\overline F$ an algebraic closure, 
the formal character and formal bi-character of $G$
are defined to be those of $G_{\overline F}$.

Let $\{F_i\}$ be a family of fields and $\{G_i\subset\GL_{n_i,F_i}\}$ be a family of reductive subgroups,
index by a set $I$.
We say that the formal characters of $\{G_i\}$ are \emph{the same} 
if $n_i=n\in\N$ for all $i\in I$ and there is a split $\Z$-subtorus
$T_\Z\subset\GL_{n,\Z}$ such that $T_\Z\times \overline F_i$
is the formal character of $G_i$ for all $i$. This gives an equivalence relation on
the formal characters of reductive subgroups of general linear groups defined over fields.
The formal characters of the family $\{G_i\}$ are said to be \emph{bounded} if
they belong to finitely many classes under the above equivalence.
One defines similarly for when the formal bi-characters of $\{G_i\}$ are the same or bounded.
	
\subsection{$\ell$-independence of algebraic monodromy groups}
	Let $K$ be a number field and $\{\rho_\ell:\Gal_K\to\GL_n(\Q_\ell)\}_\ell$ be a semisimple compatible system.
The algebraic monodromy group $\bG_\ell$ of $\rho_\ell$ is a reductive subgroup of $\GL_{n,\Q_\ell}$
for all $\ell$. Theorem \ref{Serre1}(ii) is generalized to the following.

\begin{thm}\cite{Hu13}\label{Hui1}
The formal bi-character of $\bG_\ell\subset\GL_{n,\Q_\ell}$ is independent of $\ell$.
In particular, the rank (resp. semisimple rank) of $\bG_\ell$ is independent of $\ell$.
\end{thm}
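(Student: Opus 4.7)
\emph{The plan.} Serre's result (Theorem \ref{Serre1}(ii)) identifies the formal character $\ell$-uniformly, so after a single $\GL_n$-conjugation I fix a split $\Z$-torus $T_\Z \subset \GL_{n,\Z}$ such that $T_\ell := (T_\Z)_{\Q_\ell}$ is a maximal torus of $\bG_\ell$ for every $\ell$. The formal bi-character is then equivalent to specifying the subtorus $T_\ell^{\der} := T_\ell \cap \bG_\ell^{\der} \subset T_\ell$, or dually, the sublattice
\[
\Lambda_\ell^{\ab} \;:=\; \{\chi \in \Lambda : \chi|_{T_\ell^{\der}} = 1\} \;\subset\; \Lambda := X^*(T_\Z),
\]
consisting of characters of $T_\ell$ that extend to algebraic characters of $\bG_\ell^\circ$ (equivalently, pull back from the connected abelianization $\bG_\ell^\circ/\bG_\ell^{\der}$). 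The theorem reduces to showing $\Lambda_\ell^{\ab}$ is independent of $\ell$.

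\emph{Detection via Frobenius tori.} For each unramified place $v$ of $K$, let $\bT_{v,\ell} \subset \bG_\ell$ denote the identity component of the Zariski closure of $\rho_\ell(\mathrm{Frob}_v)^{\ss}$. The surjection $\Lambda \twoheadrightarrow X^*(\bT_{v,\ell})$ (well-defined up to the Weyl-orbit embedding of $\bT_{v,\ell}$ into $T_\ell$) is governed by the multiplicative relations among the eigenvalues of $\rho_\ell(\mathrm{Frob}_v)$, which are determined by $P_v(T)$ and hence $\ell$-independent. Serre's Frobenius-torus argument provides $v$ in a set of positive Dirichlet density for which $\bT_{v,\ell}$ is a maximal torus of $\bG_\ell$. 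A character $\chi \in \Lambda$ lies in $\Lambda_\ell^{\ab}$ if and only if, for every such $v$, its pullback to the Frobenius torus comes from an abelian character of $\bG_\ell^\circ$ --- a condition one wishes to read off purely from the compatible-system data $\{P_v\}_v$.

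\emph{Main obstacle.} The nontrivial step, and the genuine new content beyond Serre, is showing the above detection is $\ell$-uniform. My approach would be: the candidate abelian subquotients $\chi \circ \rho_\ell$ (when $\chi$ extends to $\bG_\ell^\circ$) assemble into a strictly compatible system of $1$-dimensional Galois representations --- equivalently, algebraic Hecke characters --- whose infinity-type is determined by $\chi$ and the $\ell$-independent weight multiset of $V_\ell$ in $\Lambda$, hence is $\ell$-independent. Combined with a Bogomolov-type openness result controlling $\Gamma_\ell$ inside $\bG_\ell^\circ(\Q_\ell)$, one concludes that the condition ``$\chi$ extends to $\bG_\ell^\circ$'' is cut out by $\ell$-independent conditions on Frobenius eigenvalues, so $\Lambda_\ell^{\ab}$ does not depend on $\ell$. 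The consequences on rank and semisimple rank are then immediate: $\rank \bG_\ell = \rank \Lambda$ (Serre) and the semisimple rank equals $\rank \Lambda - \rank \Lambda_\ell^{\ab}$.
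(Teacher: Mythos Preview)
The present paper does not prove this theorem: it is quoted from \cite{Hu13} without argument, so there is no proof here against which to compare yours.

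On your sketch itself: the reduction to showing $\ell$-independence of the sublattice $\Lambda_\ell^{\ab}\subset\Lambda$ is correct in spirit, though you have suppressed the ambiguity coming from $N_{\GL_n}(T_\Z)/T_\Z$ (the conjugation that places a maximal torus of $\bG_\ell$ onto $(T_\Z)_{\Q_\ell}$ is only determined up to that finite group, so $\Lambda_\ell^{\ab}$ is only well-defined as an orbit, and this must be tracked). The real gap is in your ``main obstacle'' paragraph. The proposed detection via $\chi\circ\rho_\ell$ is circular: that composite is only defined once $\chi$ already extends to $\bG_\ell^\circ$, which is precisely the condition you are trying to characterize. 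You then gesture at infinity-types of Hecke characters and Bogomolov openness, but you never write down a concrete $\ell$-independent criterion on the Frobenius eigenvalue data $\{P_v\}_v$ that is \emph{equivalent} to ``$\chi\in\Lambda_\ell^{\ab}$''; without such a criterion the argument does not close, and it is not clear your ingredients suffice to produce one. For orientation, the proof in \cite{Hu13} does not pass through abelian quotients or Hecke characters at all: as its title indicates, it proceeds by a Lie-algebraic analysis of equal-rank reductive subalgebras, pinning down the root lattice of $\bG_\ell^{\der}$ inside the $\ell$-independent difference set $\Sigma-\Sigma$ of the common weight multiset.
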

	
\subsection{$\ell$-independence of algebraic envelopes}
In \cite[$\mathsection\mathsection2.8,3.1$]{Hu23},
we constructed algebraic envelopes $\uG_\ell\subset\GL_{n,\F_\ell}$ with nice properties 
for semisimple compatible systems satisfying 
certain local conditions, including those arising from geometry.
Denote by $\bar\epsilon_\ell:\Gal_K\to\F_\ell^\times$ the mod $\ell$ cyclotomic character for all $\ell$. 

\begin{thm}\cite[Theorems 2.11 and 3.1]{Hu23}\label{Hui2}
		Let $\{\rho_{\ell}:\Gal_K\to\GL_n(\Q_\ell)\}_\ell$ be a semisimple compatible system of a number field $K$. 
		Suppose there exist integers $N_1, N_2\ge0$ and a finite extension $K'/K$ such that the 
		following conditions hold.
		\begin{enumerate}[(a)]
\item (Bounded tame inertia weights): for all $\ell\gg0$ and each finite place $v$ of $K$ above $\ell$, 
the tame inertia weights of the local representation $(\bar\rho_\ell^{\ss}\otimes\bar\epsilon_\ell^{N_1})|_{\Gal_{K_v}}$ belong to $[0,N_2]$.
\item (Potential semistability): for all $\ell\gg0$ and each finite place $w$ of $K'$ not above $\ell$,
the semisimplification of the local representation $\bar\rho_\ell^{\ss}|_{\Gal_{K_{w}'}}$ is unramified.
\end{enumerate}
		Then there exist a finite Galois extension $L/K$ (with $K^{\conn}\subset L$) and for each $\ell\gg0$, 
a connected reductive subgroup $\uG_\ell\subset\GL_{n,\F_\ell}$ 
with properties below.
\begin{enumerate}[(i)]
\item The derived group $\uG_\ell^{\der}$ is the Nori group  (\cite{No87})
of $\bar\rho_\ell^{\ss}(\Gal_K)\subset\GL_n(\F_\ell)$.
\item The image $\bar\rho_\ell^{\ss}(\Gal_L)$ is a subgroup of $\uG_\ell(\F_\ell)$ 
with index bounded by a constant independent of $\ell$. 
\item The action of $\uG_\ell$ on the ambient space is semisimple.
\item The formal characters of $\uG_\ell\subset\GL_{n,\F_\ell}$ for all $\ell\gg0$ are bounded.
\item The formal bi-characters of $\bG_\ell$ and $\uG_\ell$ are the same and independent of $\ell$.
\item The commutants of $\bar\rho_\ell^{\ss}(\Gal_L)$ and $\uG_\ell$ (resp. 
$[\bar\rho_\ell^{\ss}(\Gal_L),\bar\rho_\ell^{\ss}(\Gal_L)]$ and $\uG_\ell^{\der}$) in $\End(\F_\ell^n)$ are equal.
\end{enumerate}
The group $\uG_\ell$ is called the algebraic envelope of $\rho_\ell$ 
and is uniquely determined by properties (ii)--(iv) when $\ell$ 
is sufficiently large.
\end{thm}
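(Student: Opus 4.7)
Let $L_\ell/K$ denote the finite Galois extension corresponding to the composite
\[
\Gal_K \xrightarrow{\bar\rho_\ell^{\ss}} \widehat{\uG}_\ell(\F_\ell) \twoheadrightarrow \widehat{\uG}_\ell(\F_\ell)/\uG_\ell(\F_\ell).
\]
The plan is to prove $L_\ell = K^{\conn}$ for all $\ell\gg 0$; the natural isomorphism $\pi_0(\bG_\ell) \simeq \pi_0(\widehat{\uG}_\ell)$ then follows because both component groups become canonically identified with $\Gal(K^{\conn}/K)$ (note that $\bar\Gamma_\ell$ surjects onto $\widehat{\uG}_\ell(\F_\ell)/\uG_\ell(\F_\ell)$ since $\widehat{\uG}_\ell = \bar\Gamma_\ell\cdot\uG_\ell$ and $\uG_\ell$ is connected, so Lang--Steinberg applies). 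As a preliminary reduction, Theorem~\ref{Hui2}(ii) furnishes a finite Galois extension $L/K$ with $K^{\conn}\subseteq L$ and $\bar\rho_\ell^{\ss}(\Gal_L)\subseteq\uG_\ell(\F_\ell)$, which immediately gives $L_\ell\subseteq L$. Hence both $L_\ell$ and $K^{\conn}$ sit inside the fixed finite extension $L/K$, and the problem splits into proving the two inclusions $L_\ell\subseteq K^{\conn}$ and $K^{\conn}\subseteq L_\ell$.

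For the containment $L_\ell\subseteq K^{\conn}$, I need $\bar\rho_\ell^{\ss}(\Gal_{K^{\conn}})\subseteq\uG_\ell(\F_\ell)$. Let $I_\ell$ denote the image of $\Gal_{K^{\conn}}$ in $\pi_0(\widehat{\uG}_\ell)(\F_\ell)$. The inclusion $\Gal_L\subseteq\Gal_{K^{\conn}}$ together with Theorem~\ref{Hui2}(ii) gives $|I_\ell|\le [L:K^{\conn}]$, a bound independent of $\ell$. To force $I_\ell$ to be trivial for $\ell\gg0$, I would exploit the existence of $\ell$-adic lifts: for every $\sigma\in\Gal_{K^{\conn}}$, Theorem~\ref{Serre1}(i) gives $\rho_\ell(\sigma)\in\bG_\ell^\circ(\Q_\ell)$, so the semisimple part of $\bar\rho_\ell^{\ss}(\sigma)$ is the reduction modulo $\ell$ of a semisimple element of a maximal torus of $\bG_\ell^\circ$, which by Theorems~\ref{Hui1} and~\ref{Hui2}(v) has the same formal bi-character as a maximal torus of $\uG_\ell$. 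Combining this torus-level identification with the commutant equality of Theorem~\ref{Hui2}(vi) and the uniform finiteness statement of Proposition~\ref{finite}, a careful count of $\F_\ell$-rational points on components of $\widehat{\uG}_\ell$—comparing $|\uG_\ell(\F_\ell)|\sim\ell^{\dim\uG_\ell}$ against the $\ell$-uniform bound on $|I_\ell|$—will rule out any nontrivial class in $I_\ell$ once $\ell$ is sufficiently large.

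For the reverse inclusion $K^{\conn}\subseteq L_\ell$, I would argue by Chebotarev density and the compatibility of the system. For each place $v$ of $K$ outside the ramification set, the conjugacy class of $\rho_\ell(Frob_v)$ in $\bG_\ell$, and hence its image in $\pi_0(\bG_\ell)=\Gal(K^{\conn}/K)$, is controlled by the $\ell$-independent characteristic polynomial $P_v(T)\in\Q[T]$; similarly, whether $\bar\rho_\ell^{\ss}(Frob_v)\in\uG_\ell(\F_\ell)$ is detected by $P_v(T)\bmod\ell$ for $\ell$ large. The first inclusion already supplies a natural surjection $\Gal(K^{\conn}/K)\twoheadrightarrow\Gal(L_\ell/K)$; combined with a density argument for the Frobenius classes landing in each coset of $\uG_\ell$ in $\widehat{\uG}_\ell$ (again using rational-point estimates on $\F_\ell$-varieties), this forces $[K^{\conn}:K]\le[L_\ell:K]$, so that the surjection is an isomorphism and $L_\ell = K^{\conn}$.

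The main obstacle will be the containment $L_\ell\subseteq K^{\conn}$: the construction of $\uG_\ell$ via Nori's theory does not a priori commute with forming the identity component of the algebraic monodromy group, so one cannot tautologically transfer ``$\rho_\ell(\sigma)\in\bG_\ell^\circ$'' into ``$\bar\rho_\ell^{\ss}(\sigma)\in\uG_\ell$''. Proposition~\ref{finite} is presumably the key new ingredient closing this gap: by showing that, up to $\GL_{n,\F_\ell}$-conjugation, the possible algebraic envelopes $\uG_\ell$ form a finite family uniformly in $\ell\gg0$, it provides the rigidity needed to transport the $\ell$-independent component-group data from $\bG_\ell$ to $\widehat{\uG}_\ell$ for all sufficiently large $\ell$.
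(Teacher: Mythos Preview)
Your proposal does not address the stated theorem at all. Theorem~\ref{Hui2} asserts the \emph{existence} of the algebraic envelopes $\uG_\ell\subset\GL_{n,\F_\ell}$ together with properties (i)--(vi) and a uniqueness claim; it is quoted verbatim from \cite[Theorems 2.11 and 3.1]{Hu23} and carries no proof in the present paper. There is therefore nothing in the paper to compare your argument against for this statement.

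What you have written is instead an attempted proof of Theorem~\ref{main} (the comparison of component groups $\pi_0(\bG_\ell)\simeq\pi_0(\widehat{\uG}_\ell)$), treating Theorem~\ref{Hui2} as a black box. If that was your intent, two remarks are in order. First, your argument for the inclusion $L_\ell\subseteq K^{\conn}$ (equivalently, $\bar\rho_\ell^{\ss}(\Gal_{K^{\conn}})\subseteq\uG_\ell(\F_\ell)$) is both vague and unnecessarily complicated: the paper obtains this cleanly as Proposition~\ref{componentmap}, by observing that the restricted system $\{\rho_\ell|_{\Gal_{K^{\conn}}}\}_\ell$ still has $\uG_\ell$ as its algebraic envelope (uniqueness in Theorem~\ref{Hui2}) and satisfies the maximal Frobenius tori hypothesis, whence Proposition~\ref{contain} gives the inclusion directly. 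Your torus-reduction sketch does not supply a mechanism to pass from ``semisimple part lies in a torus of the right formal character'' to ``element lies in $\uG_\ell(\F_\ell)$'', and the point-count heuristic you invoke does not close that gap. Second, your sketch for $K^{\conn}\subseteq L_\ell$ is in the right spirit and roughly parallels the paper's Proposition~\ref{mainprop}: the paper makes this precise via Serre's construction of an integral polynomial $f\in\Z[\alpha_1,\ldots,\alpha_n]$ vanishing on a nontrivial coset of $\bG_\ell^\circ$ (Lemma~\ref{poly}), a density computation (Proposition~\ref{density}), and a Lang--Weil bound on the hypersurface $\uG_\ell\cap\{\overline{f\circ\chi}=0\}$ whose degree is controlled uniformly in $\ell$ by Proposition~\ref{finite}.
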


Note that condition (v) is strictly stronger than (iv) but
(iv) is included for the uniqueness formulation.
Conjecturally, the algebraic monodromy group $\bG_\ell$ 
is independent of $\ell$ and the algebraic envelopes $\uG_\ell$ should 
have the same root datum as $\bG_\ell$ for $\ell\gg0$.
In the following we prove a finiteness result on $\ell$-independence of the algebraic envelopes,
which is crucial to the proof of Theorem \ref{main} later.
Properties (iii) and (v)  of the algebraic envelopes $\uG_\ell$ in Theorem \ref{Hui2} will be used.

\begin{prop}\label{finite}
Let $\{\rho_\ell\}_\ell$ be the semisimple compatible system in Theorem \ref{Hui2}
and $\{\uG_\ell\subset\GL_{n,\F_\ell}\}$ be the algebraic envelopes for all sufficiently large $\ell$.
Then there exist finitely many connected split reductive subgroups $G_1,G_2,...,G_m\subset\GL_{n,\Z[1/N]}$
defined over $\Z[1/N]$ (for some $N\in\N$) such that for each sufficiently large $\ell$,
the base change of the algebraic envelope $\uG_{\ell,\overline\F_\ell}$ is conjugate to $G_{i,\overline\F_\ell}$
in $\GL_{n,\overline\F_\ell}$ for some $i$.
\end{prop}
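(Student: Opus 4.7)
The plan is to combine the boundedness of the formal characters of $\{\uG_\ell\}_{\ell\gg 0}$ from Theorem \ref{Hui2}(iv) with the classical fact that a connected reductive subgroup of $\GL_n$ containing a fixed split maximal torus $T$ is rigidly determined by its root system viewed as a subsystem of the roots of $\GL_n$ relative to $T$.

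First, by Theorem \ref{Hui2}(iv) together with the definition of bounded formal character in $\mathsection 2.1$, there exist finitely many split $\Z$-subtori $T_1,\dots,T_k\subset\GL_{n,\Z}$ such that for every $\ell\gg 0$, some maximal torus of $\uG_{\ell,\overline\F_\ell}$ is $\GL_n(\overline\F_\ell)$-conjugate to $T_{j,\overline\F_\ell}$ for some index $j=j(\ell)$. After conjugating in $\GL_n(\overline\F_\ell)$, we may assume $T_{j,\overline\F_\ell}$ is itself a maximal torus of $\uG_{\ell,\overline\F_\ell}$.

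Next, fix one such $T=T_j$ and write $\Phi=\Phi(\GL_{n,\Z},T)\subset X^*(T)$ for the (finite) root system of $\GL_n$ relative to $T$; each root $\alpha\in\Phi$ has a unique one-dimensional root subgroup $U_\alpha\subset\GL_{n,\Z}$. For a connected reductive subgroup $H\subset\GL_{n,\overline\F_\ell}$ with maximal torus $T_{\overline\F_\ell}$, the root system $\Phi_H$ of $H$ is a closed subsystem of $\Phi$, and for each $\alpha\in\Phi_H$ the corresponding root subgroup of $H$ must coincide with $U_{\alpha,\overline\F_\ell}$, because the $\alpha$-weight space of $T$ on $\mathfrak{gl}_n$ is one-dimensional. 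Since $H$ is generated by $T_{\overline\F_\ell}$ together with these root subgroups, we conclude $H=G_{j,\Phi_H,\overline\F_\ell}$, where $G_{j,\Psi}\subset\GL_{n,\Z[1/N_j]}$ denotes the connected split reductive subgroup scheme generated by $T_j$ and $\{U_\alpha:\alpha\in\Psi\}$ (well-defined after inverting some integer $N_j$), for $\Psi\subset\Phi$ a closed subsystem.

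Letting $(j,\Psi)$ range over the finitely many pairs with $1\le j\le k$ and $\Psi$ a closed subsystem of $\Phi(\GL_{n,\Z},T_j)$, and setting $N:=\prod_j N_j$, yields the desired finite list $\{G_i\}\subset\GL_{n,\Z[1/N]}$ such that every $\uG_{\ell,\overline\F_\ell}$ (for $\ell\gg 0$) is $\GL_n(\overline\F_\ell)$-conjugate to some $G_{i,\overline\F_\ell}$. The only non-routine step is the rigidity assertion that a connected reductive subgroup of $\GL_n$ is determined by a split maximal torus together with its set of roots; this reduces to the fact that each non-trivial $T$-weight space on $\mathfrak{gl}_n$ is one-dimensional, so I do not anticipate any serious additional obstacle beyond the bookkeeping needed to pass from $\overline\F_\ell$ down to $\Z[1/N]$.
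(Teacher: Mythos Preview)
Your argument has a genuine gap at the rigidity step. The claim that ``the $\alpha$-weight space of $T$ on $\mathfrak{gl}_n$ is one-dimensional'' is only true when $T$ is a \emph{maximal} torus of $\GL_n$, i.e., when the $n$ characters of $T$ on the standard representation are pairwise distinct. In general $T$ is only a maximal torus of $\uG_{\ell,\overline\F_\ell}$, which may have rank strictly less than $n$, and then the nonzero $T$-weight spaces in $\mathfrak{gl}_n$ can be large. For a concrete example, take $H=\SL_2\hookrightarrow\GL_4$ via two copies of the standard representation; its maximal torus is $T=\{\diag(t,t^{-1},t,t^{-1})\}$, the unique positive root is $\alpha=2$, and the $\alpha$-weight space in $\mathfrak{gl}_4$ is four-dimensional. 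Conjugating $H$ by elements of $Z_{\GL_4}(T)\cong\GL_2\times\GL_2$ produces a continuous family of copies of $\SL_2$ all sharing the same maximal torus $T$ and the same root set $\{\pm\alpha\}$ but with distinct root subgroups. So the pair $(T,\Phi_H)$ does \emph{not} determine $H$ inside $\GL_n$, and your finite list $\{G_{j,\Psi}\}$ is not even well-defined.

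The paper circumvents this by working with the abstract group rather than its root subgroups inside $\GL_n$. It first treats the derived group $\uG_\ell^{\der}$: there are finitely many Chevalley $\Z$-models $S$ of bounded rank, and for each one the embedding $S_{\overline\F_\ell}\hookrightarrow\GL_{n,\overline\F_\ell}$ is a semisimple representation whose weights lie in a finite set independent of $\ell$ (Lemma \ref{union}, using the $\ell$-independence of the formal bi-character). Springer's theorem (Lemma \ref{irred}) then guarantees that for $\ell\gg0$ each irreducible summand descends to a fixed $\Z$-representation of $S$ indexed by its highest weight, yielding finitely many $\Z$-models for the embedding of $\uG_\ell^{\der}$. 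The connected center is handled separately in a second step via the determinant map. If you want to repair your approach, you would need to replace the false one-dimensionality claim with an argument that the $Z_{\GL_n}(T)$-orbits of connected reductive subgroups with maximal torus $T$ and fixed root set are finite in number and admit $\Z[1/N]$-models --- but proving that essentially forces you back through representation theory as the paper does.
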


\begin{proof}
We first treat the semisimple part $\uG_{\ell}^{\der}$ of the algebraic envelope.
The base change $\uG_{\ell,\overline\F_\ell}^{\der}$ (to an algebraic closure)
admits a $\Z$-model $S$ (Chevalley group defined over $\Z$), i.e., an isomorphism
$\iota_\ell: S_{\overline\F_\ell}\stackrel{\simeq}{\longrightarrow} \uG_{\ell,\overline\F_\ell}^{\der}$.
Since the rank of $\uG_{\ell}^{\der}$ is bounded by $n$, such $\Z$-models 
have finitely many possibilities $S_1,S_2,...,S_h$.
For $1\leq i\leq h$, fix $T_i\subset S_i$ a split maximal torus 
and $B_i\subset S_i$ a Borel subgroup containing $T_i$.
For each $i$, we would like to show that there are finitely many $\Z$-representations 
$\{\theta_{ij}:S_i\to\GL_{n,\Z}\}_{j\in J_i}$ such that if $S_i$ is the $\Z$-model 
of $\uG_{\ell,\overline\F_\ell}^{\der}$
and $\ell$ is sufficiently large, then 
\begin{equation}\label{compose}
 S_{i,\overline\F_\ell}\stackrel{\iota_\ell}{\longrightarrow}\uG_{\ell,\overline\F_\ell}^{\der}\to \GL_{n,\overline\F_\ell}
\end{equation}
can be descended to some $\theta_{ij}$. 

Without loss of generality, we assume $h=1$ and write $T\subset B\subset S$ instead of $T_i\subset B_i\subset S_i$.
Let $\mathbb{X}(T):=\mathrm{Hom}(T,\mathbb{G}_{m,\Z})$ be the character group of $T$.
For each highest weight $\lambda\in\mathbb{X}(T)$ (with respect to the Borel $B$),
let  $\theta_\lambda$ be a $\Z$-representation of $S$ that after base change to $\C$,
is irreducible with highest weight $\lambda$ (with respect to $B_\C$).
We need the following result by Springer.

\begin{lem}\label{irred}\cite[Corollary 4.3]{Sp68}
The base change $\theta_\lambda\times\overline\F_\ell$
is an irreducible representation of $S_{\overline\F_\ell}$ for $\ell\gg0$.
\end{lem}

Let $V_\ell$ be the faithful representation $\uG_{\ell,\overline\F_\ell}^{\der}\to\GL_{n,\overline\F_\ell}$ 
and  $\Sigma_\ell\subset\mathbb{X}(\iota_\ell(T_{\overline\F_\ell}))$
be the multiset of weights of $V_\ell$.
Denote by $\iota_\ell^*: \mathbb{X}(\iota_\ell(T_{\overline\F_\ell}))\to\mathbb{X}(T)$ the 
isomorphism of character groups induced by $\iota_\ell$.

\begin{lem}\label{union}
The subset $\bigcup_{\ell\gg0} \iota_\ell^*(\Sigma_\ell)\subset\mathbb{X}(T)$ is finite.
\end{lem}

\begin{proof}
Since  $V_\ell\otimes V_\ell^\vee$ contains the adjoint representation 
of $\uG_{\ell,\overline\F_\ell}^{\der}$ as subrepresentation,
the difference 
$$\Sigma_\ell -\Sigma_\ell:=\{\lambda-\lambda':~(\lambda,\lambda')\in 
\Sigma_\ell^2\}\subset\mathbb{X}(\iota_\ell(T_{\overline\F_\ell}))$$ 
contains the set of roots $R_\ell$ of  $\uG_{\ell,\overline\F_\ell}^{\der}$ with respect to 
the maximal torus $\iota_\ell(T_{\overline\F_\ell})$.
Since the formal character of $\uG_{\ell,\overline\F_\ell}^{\der}$ is independent of $\ell\gg0$ 
by Theorem \ref{Hui2}(v), there exist a finite multiset $\Sigma$ in $\Z^r$ 
and an isomorphism $\mathbb{X}(\iota_\ell(T_{\overline\F_\ell}))\simeq \Z^r$ for $\ell\gg0$
such that $\Sigma_\ell$ and $\Sigma$ correspond. 
Hence for $\ell\gg0$, the multisets $\Sigma_\ell \sqcup (\Sigma_\ell -\Sigma_\ell)$ and 
\begin{equation}\label{weights}
\Sigma \sqcup (\Sigma-\Sigma)
\end{equation}
also correspond.
To prove the lemma, it suffices to show that the image of \eqref{weights} under the map
\begin{equation*}\label{phiell}
\phi_\ell:\Z^r\stackrel{\simeq }{\rightarrow} \mathbb{X}(\iota_\ell(T_{\overline\F_\ell}))\stackrel{\iota_\ell^*}{\rightarrow}\mathbb{X}(T)
\end{equation*}
has finitely many possibilities for all $\ell\gg0$.

Let $R$ be the subset of \eqref{weights} corresponding to $R_\ell$ (the roots).
On the one hand, the map $\phi_\ell$ is determined by its restriction to $R$.
On the other hand, $R$ has finitely many possibilities for $\ell\gg0$ because \eqref{weights} has size $n+n^2$.
Since $\phi_\ell(R)$ must be the set of roots of $S$ by 
the isomorphism $\iota_\ell$, the finiteness assertion holds.
\end{proof}

Let $V_\ell=W_1\oplus W_2\oplus\cdots\oplus W_s$
be the irreducible decomposition of the semisimple representation $V_\ell$ (Theorem \ref{Hui2}(iii))
of $\uG_{\ell,\overline\F_\ell}^{\der}$ for $\ell\gg0$. 
For $1\leq k\leq s$, the irreducible factor $W_k$ corresponds 
to a highest weight 
$$\lambda_k\in \bigcup_{\ell\gg0} \iota_\ell^*(\Sigma_\ell).$$ 
On the one hand, Lemma \ref{union} implies that the $\Z$-representation 
 $\theta_\ell:=\oplus_{k=1}^{s} \theta_{\lambda_k}$ of $S$ 
has only finitely many possibilities for all $\ell\gg0$.
On the other hand, since an irreducible representation 
of $S_{\overline\F_\ell}$ is determined by the highest weight,
it follows from Lemma \ref{irred}
that $S_{\overline\F_\ell}\stackrel{\iota_\ell}{\longrightarrow}\uG_{\ell,\overline\F_\ell}^{\der}\to \GL_{n,\overline\F_\ell}$ can be descended to $\theta_\ell$ for $\ell\gg0$. 
We conclude that there is a finite set of $\Z$-representations 
$$\{\theta_{ij}:S_i\to\GL_{n,\Z}:~ 1\leq i\leq h,~j\in J_i\}$$
such that for $\ell\gg0$, \eqref{compose} can be descended to some $\theta_{ij}$.

Since the image of $\theta_{ij}\times\Q$ (the base change to $\Q$) 
is a connected split semisimple subgroup of $\GL_{n,\Q}$, 
it can be extended (via generic smoothness) to a $\Z[1/N_{ij}]$-smooth closed subgroup subscheme  
$$G_{ij}^{\der}\subset\GL_{n,\Z[1/N_{ij}]}$$
for some $N_{ij}\in\N$. By \cite[Propositions 3.1.9 and 3.1.12]{Co14}, we may assume $G_{ij}^{\der}$
is a reductive group scheme defined over $\Z[1/N_{ij}]$.
Enlarging $N_{ij}$ if necessary, $\theta_{ij}\times \Z[1/N_{ij}]$
factors through  $G_{ij}^{\der}$ such that the image of $\theta_{ij}\times\F_\ell$
is equal to the special fiber $G_{ij,\F_\ell}^{\der}$ for $\ell\gg0$.
Therefore, there exist $N\in\N$ and a finite set of split
reductive subgroup subschemes
\begin{equation}\label{Chevalley}
\{G_i^{\der}\subset\GL_{n,\Z[1/N]}:~1\leq i\leq r\}
\end{equation}
 such that for all $\ell\gg0$, the semisimple group
$\uG_{\ell,\overline\F_\ell}^{\der}$ and some $G_{i,\overline\F_\ell}^{\der}$ 
are conjugate in $\GL_{n,\overline\F_\ell}$.

To finish the proof, for simplicity we assume $r=1$ and (enlarging $N$ if necessary)
$$G^{\der}\subset\prod_{1\leq j\leq s}\GL_{n_j,\Z[1/N]}$$
in \eqref{Chevalley} such that for all $j$,  the $j$th  representation 
$G^{\der}_{\overline\F_\ell}\to\GL_{n_j,\overline\F_\ell}$ after base change
is irreducible for $\ell\gg0$. Assume also $G^{\der}_{\overline\F_\ell}=\uG^{\der}_{\ell,\overline\F_\ell}$
holds and let $\uC_\ell$ be the connected component of the center of $\uG_{\ell,\overline\F_\ell}$.
It follows that 
$$\uC_\ell\subset\mathbb{G}_{m,\overline\F_\ell}^s\subset\prod_{1\leq j\leq s}\GL_{n_j,\overline\F_\ell},$$
where the middle group is the center of the big one.
Consider the morphism
$$\text{Det}:\prod_{1\leq j\leq s}\GL_{n_j,\overline\F_\ell}
\stackrel{}{\longrightarrow}\mathbb{G}_{m,\overline\F_\ell}^s$$
that sends $(A_1,...,A_s)$ to $(\det(A_1),...,\det(A_s))$ (coordinates of determinants).
Let $\uT_\ell$ be a maximal torus of $\uG_{\ell,\overline\F_\ell}$. By construction,
we have 
$$\text{Det}(\uC_\ell)=\text{Det}(\uG_{\ell,\overline\F_\ell})=\text{Det}(\uT_\ell).$$
Since the formal character (i.e., $\uT_\ell$) of $\uG_{\ell,\overline\F_\ell}$ is independent of $\ell\gg0$ (Theorem \ref{Hui2}(v)),
the family of subtori 
$$\{\text{Det}(\uC_\ell)\subset\mathbb{G}_{m,\overline\F_\ell}^s\}_{\ell\gg0}$$
can be given by finitely many $\Z$-subtori of $\mathbb{G}_{m,\Z}^s$. 
Since the restriction of $\text{Det}$
to $\mathbb{G}_{m,\overline\F_\ell}^s$ is a finite morphism onto itself and $\uC_\ell$ is connected,
it follows that the family $\{\uC_\ell\subset\mathbb{G}_{m,\overline\F_\ell}^s\}_{\ell\gg0}$
can also be given by finitely many $\Z$-subtori of $\mathbb{G}_{m,\Z}^s$. 
By adding each of these (finitely many) $\Z$-subtori to $G^{\der}$, we obtain the desired finiteness assertion.
\end{proof}

\begin{remark}
Using similar strategy, one can prove that there exist finitely many connected split reductive subgroups $G_1,G_2,...,G_k\subset\GL_{n,\Q}$
defined over $\Q$ such that for each $\ell$,
the base change of the identity component of algebraic monodromy group $\bG_{\ell,\overline\Q_\ell}^\circ$ 
is conjugate to $G_{i,\overline\Q_\ell}$
in $\GL_{n,\overline\Q_\ell}$ for some $i$.
\end{remark}

\subsection{MFT hypothesis and mod $\ell$ monodromy groups}
Let $\{\rho_\ell:\Gal_K\to\GL_n(\Q_\ell)\}_\ell$ be a semisimple compatible system.
Consider a member $\rho_\ell$ of the system and let $\bG_\ell$ be
its algebraic monodromy group.
If  $\bar v$ is a place of $\overline K$ extending $v\in\Sigma_K\backslash (S\cup S_\ell)$,
then $\rho_\ell$ is unramified at $v$ and 
the image of Frobenius $\rho_\ell(Frob_{\bar v})\in\bG_\ell(\Q_\ell)$ is well-defined.
The \emph{Frobenius torus} at $\bar v$ is defined (by Serre) as 
the identity component $\bT_{\bar v}$ of the smallest algebraic group 
containing the semisimple part of $\rho_\ell(Frob_{\bar v})$; $\bT_{\bar v}$
is a $\Q_\ell$-subtorus of $\bG_\ell$.
 
We say that $\rho_\ell$ satisfies the \emph{maximal Frobenius tori hypothesis} (MFT)
if $\bG_\ell$ is connected and there is a Dirichlet density one subset 
$\mathscr{S}_K\subset\Sigma_K\backslash (S\cup S_\ell)$ such that 
if $\bar v$ is a place of $\overline K$ extending $v\in\mathscr S_K$,
then $\bT_{\bar v}$ is a maximal torus of $\bG_\ell$. 
By Theorem \ref{Serre1} and the compatibility conditions, if one $\rho_\ell$ satisfies MFT
then all $\rho_\ell$ satisfy MFT. We say that $\{\rho_\ell\}_\ell$ satisfies MFT
if some $\rho_\ell$ (hence all) satisfies MFT.
If $\{\rho_\ell\}_\ell$ is arising from geometry and $\bG_\ell$ is connected,
then $\{\rho_\ell\}_\ell$ satisfies MFT 
(the idea goes back to Serre, see \cite[$\mathsection3.5$]{Hu23} and \cite[Theorem 2.6]{Hu18}
for details). 

\begin{prop}\label{contain}\cite[Proposition 3.14]{Hu23}
Let $\{\rho_\ell:\Gal_K\to\GL_n(\Q_\ell)\}_\ell$ be a semisimple compatible system satisfying 
the conditions in Theorem \ref{Hui2} with algebraic envelopes $\uG_\ell\subset\GL_{n,\F_\ell}$.
If the compatible system satisfies MFT, then 
the mod $\ell$ monodromy group 
$$\bar\Gamma_\ell:=\bar\rho_\ell^{\ss}(\Gal_K)\subset\uG_\ell(\F_\ell)$$
for all sufficiently large $\ell$.
\end{prop}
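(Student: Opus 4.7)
The plan is to show that the Zariski closure $\bH_\ell$ of $\bar\Gamma_\ell:=\bar\rho_\ell^{\ss}(\Gal_K)$ in $\GL_{n,\F_\ell}$ coincides with the connected algebraic envelope $\uG_\ell$ for all $\ell\gg 0$. By Theorem~\ref{Hui2}(ii), $\bar\rho_\ell^{\ss}(\Gal_L)\subset\uG_\ell(\F_\ell)$ with index uniformly bounded in $\ell$; since $\uG_\ell$ is connected, the Zariski closure of $\bar\rho_\ell^{\ss}(\Gal_L)$ is $\uG_\ell$, hence $\bH_\ell^\circ=\uG_\ell$. Then $\bar\rho_\ell^{\ss}$ induces a surjection $\Gal(L/K)\twoheadrightarrow\bH_\ell(\F_\ell)/\uG_\ell(\F_\ell)$ whose target has order bounded by $[L:K]$ independently of $\ell$; proving the proposition is equivalent to showing this target is trivial for $\ell\gg 0$.

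To exploit MFT, I would first reduce to a finite computation via Chebotarev: since $\mathscr{S}_K$ has Dirichlet density one in $\Sigma_K$, one can pick finitely many places $v_1,\dots,v_r\in\mathscr{S}_K$ whose Frobenius classes exhaust $\Gal(L/K)$. It then suffices to prove that $\bar\rho_\ell^{\ss}(Frob_{v_i})\in\uG_\ell(\F_\ell)$ for every $i$ and all $\ell$ larger than a threshold depending on this fixed finite collection. Fix one such $v=v_i$ and set $t_v:=\rho_\ell(Frob_v)^{\ss}$ and $\bar t_v:=\bar\rho_\ell^{\ss}(Frob_v)$. The eigenvalues of $t_v$ are roots of $P_v(T)\in\Q[T]$ (independent of $\ell$), and their multiplicative relation lattice $\Lambda_v\subset\Z^n$ encodes the formal character of the Frobenius torus $\bT_v$, which by MFT is a maximal torus of $\bG_\ell$. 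Writing $m\leq[L:K]$ for the order of $Frob_v$ in $\Gal(L/K)$, we have $\bar t_v^m\in\bar\rho_\ell^{\ss}(\Gal_L)\subset\uG_\ell(\F_\ell)$.

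For $\ell$ avoiding the discriminants of $P_v(T)$ and of its $m$-fold power analogue, both $\bar t_v$ and $\bar t_v^m$ are regular semisimple in $\GL_n(\overline\F_\ell)$, share the common centralizer torus $\bar T_v:=Z_{\GL_n}(\bar t_v)$ (a maximal torus of $\GL_{n,\F_\ell}$), and the centralizer of $\bar t_v^m$ in $\uG_\ell$ is a maximal torus $\bar T_v^{\uG}\subset \bar T_v\cap\uG_\ell$ of rank $\rk(\uG_\ell)=\rk(\bG_\ell)$ by Theorem~\ref{Hui2}(v). The crux is then to identify $\bar T_v^{\uG}$ with the subtorus of $\bar T_v$ cut out by the characteristic-zero lattice $\Lambda_v$ (which visibly contains $\bar t_v$): this is where Proposition~\ref{finite} and the remark following it enter, as they allow one to descend both $\uG_\ell$ and $\bG_\ell$ (together with their maximal tori and $\bT_v\subset\bG_\ell$) to a common $\Z[1/N]$-model on each of finitely many infinite subsets of primes, and to interpret $\bar T_v^{\uG}$ as the reduction mod $\ell$ of $\bT_v$'s integral model inside that common ambient group. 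Under this identification, $\bar t_v$ is the reduction of $t_v\in\bT_v(\Z_\ell)$, so $\bar t_v\in\bar T_v^{\uG}(\F_\ell)\subset\uG_\ell(\F_\ell)$. Combining over $i$ and applying Chebotarev yields $\bar\Gamma_\ell\subset\uG_\ell(\F_\ell)$ for all $\ell\gg 0$. The hardest step is precisely this torus identification, where the finite list of models from Proposition~\ref{finite} is essential to match characteristic-zero and mod-$\ell$ Frobenius data uniformly in $\ell$; without it, $\bar T_v^{\uG}$ and the $\Lambda_v$-torus are two a priori distinct $\rk(\bG_\ell)$-dimensional subtori of $\bar T_v$ with the same formal character in $\GL_n$ but no obvious reason to coincide.
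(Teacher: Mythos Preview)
The paper does not supply its own proof of this proposition: it is quoted from \cite[Proposition~3.14]{Hu23}, with a remark that the idea originates in \cite[Theorem~4.5]{HL20}. So there is no in-paper argument to compare against, and your outline---reduce via Chebotarev to finitely many Frobenii with maximal Frobenius torus, then show each $\bar\rho_\ell^{\ss}(Frob_{v_i})$ lands in $\uG_\ell(\F_\ell)$---is indeed the strategy of those references.

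That said, your key step has a genuine gap. You invoke Proposition~\ref{finite} and the remark after it to ``descend both $\uG_\ell$ and $\bG_\ell$ \dots\ to a common $\Z[1/N]$-model'' and thereby identify $\bar T_v^{\uG}$ with the subtorus $S_v\subset\bar T_v$ cut out by the characteristic-zero relation lattice $\Lambda_v$. But those results say only that $\uG_{\ell,\overline\F_\ell}$ is $\GL_n$-conjugate to one of finitely many $G_{i,\overline\F_\ell}$, and \emph{separately} that $\bG_{\ell,\overline\Q_\ell}^\circ$ is conjugate to one of finitely many $G_{j,\overline\Q_\ell}'$. Nothing matches $i$ with $j$, and the conjugating elements depend on $\ell$ in an uncontrolled way; so one cannot conclude that the specific subtorus $\bar T_v^{\uG}\subset\bar T_v$ equals $S_v$. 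You correctly diagnose that $\bar T_v^{\uG}$ and $S_v$ are two rank-$\rk(\bG_\ell)$ subtori of $\bar T_v$ with the same formal character in $\GL_n$, but Proposition~\ref{finite} does not supply the missing reason for them to coincide: equality of formal characters is only up to the Weyl group $S_n$ acting on $\bar T_v$, and over $\F_\ell$ the cyclic group generated by $\bar t_v^m$ is finite, so ``Zariski closure of $\langle\bar t_v^m\rangle$'' cannot recover $S_v$ either.

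The arguments in \cite{HL20,Hu23} close this gap not by comparing abstract models but by using the explicit \emph{construction} of $\uG_\ell$: the derived group is the Nori envelope of $\bar\Gamma_\ell$ (Theorem~\ref{Hui2}(i)), and the connected center is built so that the resulting formal character matches the one read off from the $\ell$-adic side. In effect the torus $S_v$ is shown to lie in $\uG_\ell$ by the very way the central torus of $\uG_\ell$ is manufactured, not by a post hoc appeal to Proposition~\ref{finite}.
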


\begin{remark}
This result is first proved in \cite[Theorem 4.5]{HL20}
for the compatible system $\{H^w_{\text{\'et}}(X_{\overline K},\Q_\ell)\}_\ell$ where $X/K$
is a smooth projective variety and the idea of proof can be carried over.
\end{remark}

\section{Comparison of component groups}
	\subsection{Proof of Theorem \ref{main}}
	
Let	$\{\rho_{\ell}:\Gal_K\to\GL_n(\Q_\ell)\}_\ell$ be semisimple compatible system of $\ell$-adic representations that 
is arising from geometry. The semisimplified reduction modulo $\ell$ of 
$\{\rho_\ell\}_\ell$ is $\{\bar\rho_{\ell}^{\ss}:\Gal_K\to\GL_n(\F_\ell)\}_\ell$.
Let $K^{\conn}/K$ be the Galois extension in Theorem \ref{Serre1}(i).
 We record the following groups for every $\ell$.
\begin{itemize}
\item The $\ell$-adic monodromy group $\Gamma_\ell:=\rho_\ell(\Gal_K)\subset\GL_n(\Q_\ell)$;
\item The mod $\ell$ monodromy group $\bar\Gamma_\ell:=\bar\rho_{\ell}^{\ss}(\Gal_K)\subset\GL_n(\F_\ell)$;
\item The algebraic monodromy group $\bG_\ell\subset\GL_{n,\Q_\ell}$.
	\item $\Gamma_\ell^\circ:=\rho_\ell(\Gal_{K^{\conn}})$;
	\item $\bar\Gamma_\ell^\circ:=\bar\rho_\ell^{\ss}(\Gal_{K^{\conn}})$.
\end{itemize}

Since $\{\rho_\ell\}_\ell$ satisfies the conditions in Theorem \ref{Hui2} ($\mathsection2.3$), 
algebraic envelopes $\uG_\ell\subset\GL_{n,\F_\ell}$ can be attached for $\ell\gg0$.
By Theorem \ref{Hui2}(ii), there is a finite Galois extension $L/K$ such 
that $\bar\rho_\ell^{\ss}(\Gal_L)\subset \uG_\ell(\F_\ell)\subset\GL_n(\F_\ell)$ for $\ell\gg0$.
Since $\bar\Gamma_\ell$ normalizes $\bar\rho_\ell^{\ss}(\Gal_L)$, the uniqueness assertion
in Theorem \ref{Hui2} implies that $\bar\Gamma_\ell$ normalizes $\uG_\ell$ for $\ell\gg0$.
Hence, we have following groups for $\ell\gg0$.

\begin{itemize}
	\item The algebraic envelope $\uG_\ell\subset\GL_{n,\F_\ell}$;
	\item The full algebraic envelope
	$\widehat{\uG}_\ell:= \bar\Gamma_\ell\cdot\uG_\ell\subset\GL_{n,\F_\ell}$.
\end{itemize}

Note that the identity component of $\widehat{\uG}_\ell$ is $\uG_\ell$.
We deduce a natural morphism between the component groups.

\begin{prop}\label{componentmap}
Let	$\{\rho_{\ell}:\Gal_K\to\GL_n(\Q_\ell)\}_\ell$ be a semisimple compatible system
that is arising from geometry. For all sufficiently large $\ell$, we 
have a natural surjection
\begin{equation}\label{natural}
\pi_0(\bG_\ell):=\bG_\ell/\bG_\ell^\circ\simeq \Gamma_\ell/\Gamma_\ell^\circ\twoheadrightarrow
\bar\Gamma_\ell/\bar\Gamma_\ell^\circ\twoheadrightarrow \widehat{\uG}_\ell/\uG_\ell=:\pi_0(\widehat{\uG}_\ell).
\end{equation}
\end{prop}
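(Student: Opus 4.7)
The strategy is to establish each of the three arrows in \eqref{natural} individually, viewing the first two as formal consequences of the common role played by $\Gal(K^{\conn}/K)$ and concentrating the substantive work on the third.

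For the initial isomorphism $\pi_0(\bG_\ell)\simeq\Gamma_\ell/\Gamma_\ell^\circ$, I would begin from Theorem \ref{Serre1}(i), which identifies $\Gal_{K^{\conn}}$ as the kernel of the composition $\Gal_K\stackrel{\rho_\ell}{\longrightarrow}\bG_\ell(\Q_\ell)\to \pi_0(\bG_\ell)$. This forces $\Gamma_\ell^\circ=\rho_\ell(\Gal_{K^{\conn}})=\Gamma_\ell\cap \bG_\ell^\circ(\Q_\ell)$, giving an injection $\Gamma_\ell/\Gamma_\ell^\circ\hookrightarrow \pi_0(\bG_\ell)$, and surjectivity comes from Zariski-density of $\Gamma_\ell$ in $\bG_\ell$. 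For the middle arrow, I note that $\bar\Gamma_\ell/\bar\Gamma_\ell^\circ$ is by construction the image of $\Gal(K^{\conn}/K)$ under $\bar\rho_\ell^{\ss}$, so the previous step produces the quotient map $\Gamma_\ell/\Gamma_\ell^\circ\twoheadrightarrow\bar\Gamma_\ell/\bar\Gamma_\ell^\circ$ as required.

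The real work lies in the final surjection. Since $\widehat{\uG}_\ell=\bar\Gamma_\ell\cdot\uG_\ell$ as algebraic groups, the finite group $\bar\Gamma_\ell$ already meets every component of $\widehat{\uG}_\ell$, so the natural map $\bar\Gamma_\ell\to\widehat{\uG}_\ell/\uG_\ell$ is surjective; the question is whether it descends to a map on $\bar\Gamma_\ell/\bar\Gamma_\ell^\circ$, which amounts to showing $\bar\rho_\ell^{\ss}(\Gal_{K^{\conn}})\subseteq \uG_\ell(\F_\ell)$ for $\ell\gg0$. My plan is to restrict the compatible system to $\Gal_{K^{\conn}}$, producing a semisimple compatible system of $K^{\conn}$ that still arises from geometry but whose algebraic monodromy groups are now the connected groups $\bG_\ell^\circ$. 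For such a connected system MFT holds, so Proposition \ref{contain} applied to the restriction yields $\bar\rho_\ell^{\ss}(\Gal_{K^{\conn}})\subseteq \uG_\ell'(\F_\ell)$ for $\ell\gg0$, where $\uG_\ell'$ denotes its algebraic envelope. The delicate point, which I view as the main obstacle, is to identify $\uG_\ell'=\uG_\ell$ for $\ell\gg0$: I would invoke the uniqueness clause of Theorem \ref{Hui2}, observing that $\uG_\ell$ itself is a connected reductive subgroup of $\GL_{n,\F_\ell}$ containing $\bar\rho_\ell^{\ss}(\Gal_{L\cdot K^{\conn}})$ with index uniformly bounded in $\ell$, acting semisimply on $\F_\ell^n$, and with formal character bounded independently of $\ell$, so $\uG_\ell$ already satisfies the defining properties of an algebraic envelope for the restricted system and the uniqueness forces the coincidence.
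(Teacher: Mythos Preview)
Your proposal is correct and follows essentially the same route as the paper: reduce to showing $\bar\Gamma_\ell^\circ\subset\uG_\ell(\F_\ell)$ for $\ell\gg0$, restrict the system to $\Gal_{K^{\conn}}$, invoke the uniqueness clause of Theorem~\ref{Hui2} to identify the envelope of the restricted system with $\uG_\ell$, and then apply MFT together with Proposition~\ref{contain}. The paper's proof is terser (it does not spell out the first two arrows or the verification of the envelope axioms for the restricted system), but the substance is the same.
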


\begin{proof}
It suffices to show that $\bar\Gamma_\ell^\circ\subset\uG_\ell(\F_\ell)$ for $\ell\gg0$.
Since the restriction $\{\rho_\ell|_{\Gal_{K^{\conn}}}\}_\ell$ is also a semisimple compatible system 
arising from geometry, the uniqueness assertion in Theorem \ref{Hui2}
implies that the algebraic envelopes of the restriction system are likewise $\uG_\ell$ for $\ell\gg0$.
Since the restriction system satisfies MFT (see $\mathsection2.5$), we are done by Proposition \ref{contain}.
\end{proof}
	
To prove Theorem \ref{main}, we have to prove that \eqref{natural} is an isomorphism for $\ell\gg0$.
Since $\pi_0(\bG_\ell)\simeq \Gal(K^{\conn}/K)$ for all $\ell$, we obtain
a Galois subextension $K_\ell$ of $K^{\conn}/K$ such that $\pi_0(\widehat{\uG}_\ell)\simeq \Gal(K_\ell/K)$
for $\ell\gg0$. If Theorem \ref{main} is false, then there exist  an infinite set $\mathcal L$ 
of rational primes and a Galois subextension $K'$ of $K^{\conn}/K$ such that 
$K'\subsetneq K^{\conn}$ and $K_\ell=K'$ for all $\ell\in\mathcal L$.
Since the restriction system $\{\rho_\ell|_{\Gal_{K'}}\}_\ell$ also 
has $\{\uG_\ell\subset\GL_{n,\F_\ell}\}_{\ell\gg0}$ as algebraic envelopes (by the uniqueness assertion 
of Theorem \ref{Hui2}), 
we may assume $\bG_\ell$ is not connected (for all $\ell$) and 
the full algebraic envelope
$\widehat{\uG}_\ell=\uG_\ell$ is connected for $\ell\in\mathcal L$ by replacing $K$ with $K'$.
This is impossible by the following proposition and thus Theorem \ref{main} holds.

\begin{prop}\label{mainprop}
Let	$\{\rho_{\ell}:\Gal_K\to\GL_n(\Q_\ell)\}_\ell$ be a semisimple compatible system
that is arising from geometry. If the algebraic monodromy $\bG_\ell$ is not connected for some $\ell$, 
then the full algebraic envelope $\widehat{\uG}_\ell$ is not connected for all 
sufficiently large $\ell$.
\end{prop}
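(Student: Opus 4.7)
The plan is to argue by contradiction, combining Chebotarev density with a Lang--Weil style count of $\F_\ell$-rational points in the spirit of Serre's proof of Theorem \ref{Serre1}(i). Assume $\bG_\ell$ is disconnected (so $K^{\conn}\neq K$; this is independent of $\ell$ by Theorem \ref{Serre1}(i)), and suppose for contradiction that there is an infinite set $\mathcal{L}$ of primes with $\widehat{\uG}_\ell = \uG_\ell$, i.e.\ $\bar\Gamma_\ell \subseteq \uG_\ell(\F_\ell)$, for every $\ell \in \mathcal{L}$. Fix a non-identity $\tau \in \Gal(K^{\conn}/K) = \pi_0(\bG_\ell)$ and let $C\subseteq \bG_\ell$ be the corresponding non-identity component. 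By Chebotarev, the set $\Sigma_\tau$ of finite places $v$ of $K$ with $Frob_v|_{K^{\conn}} = \tau$ has positive Dirichlet density $\delta>0$, and for each such $v$ the characteristic polynomial $P_v(T)\in\Z[T]$ lies in the image $\chi(C)\subseteq\A^n$ of the characteristic polynomial map.

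To treat the varieties uniformly in $\ell$, I pass to a common integral model. Theorem \ref{Hui2}(v) says $\bG_\ell^\circ$ and $\uG_\ell$ share the same formal bi-character; combining this with Proposition \ref{finite} (for $\uG_\ell$) and its remark (for $\bG_\ell^\circ$), after shrinking $\mathcal{L}$ to an infinite subset I obtain a split connected reductive $\Z[1/N]$-subgroup $\cH\subseteq\GL_{n,\Z[1/N]}$ (for some $N\in\N$) such that $\bG_\ell^\circ$ is $\GL_n(\overline\Q_\ell)$-conjugate to $\cH_{\overline\Q_\ell}$ and $\uG_\ell$ is $\GL_n(\overline\F_\ell)$-conjugate to $\cH_{\overline\F_\ell}$. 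Because $\pi_0(\bG_\ell)$ is $\ell$-independent (Theorem \ref{Serre1}(i)), a similar finiteness argument produces a $\Z[1/N]$-model $\cG$ of $\bG_\ell$ with identity component $\cH$; let $\cG_\tau\subseteq\cG$ be the $\Z[1/N]$-coset of $\cH$ corresponding to $\tau$. Put $V:=\chi(\cH)$ and $Y:=\overline{\chi(\cG_\tau)}$, both as subvarieties of $\A^n_{\Z[1/N]}$. Then $V_{\F_\ell}=\chi(\uG_\ell)$ for every large $\ell\in\mathcal{L}$, and $P_v\in Y(\Z)$ for each $v\in\Sigma_\tau$.

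The crucial input is a polynomial $f\in\Z[1/N][a_1,\ldots,a_n]$ with $f|_Y=0$ and $f|_V\not\equiv 0$; equivalently, the non-containment $V\not\subseteq Y$. Granting such $f$, for each $v\in\Sigma_\tau$ one has $f(P_v)=0\in\Z$, so $\bar f(\bar P_v)=0$ in $\F_\ell$, and the assumption $\bar\Gamma_\ell\subseteq\uG_\ell(\F_\ell)$ forces $\bar\rho_\ell^{\ss}(Frob_v)$ into the proper closed subvariety $\uG_\ell^f:=\{g\in\uG_\ell:f(\chi(g))=0\}$ of $\uG_\ell$, which has codimension at least one. By Lang--Weil, $|\uG_\ell^f(\F_\ell)|=O(\ell^{\dim\uG_\ell-1})$, whereas Theorem \ref{Hui2}(ii) together with Lang--Weil for the connected reductive $\uG_\ell$ yields $|\bar\Gamma_\ell|\gtrsim \ell^{\dim\uG_\ell}$ with implied constants independent of $\ell$. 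Applying Chebotarev to the finite image $\bar\Gamma_\ell$ then bounds the density of such $v$ by $|\uG_\ell^f(\F_\ell)|/|\bar\Gamma_\ell|=O(1/\ell)$, which contradicts the lower bound $\delta>0$ once $\ell\in\mathcal{L}$ is sufficiently large.

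The main obstacle is verifying the non-containment $V\not\subseteq Y$: the characteristic polynomials of the identity component $\cH$ are not all realized already by elements of the non-identity coset $\cG_\tau$. By Steinberg's analysis of twisted conjugacy classes, $\dim Y=\rank(\cH^{\Ad(g_0)})$ for any $g_0\in\cG_\tau$, while $\dim V=\rank(\cH)$, so $V\subseteq Y$ would force $\Ad(g_0)$ to fix a maximal torus of $\cH$ pointwise. A case analysis then rules this out for $g_0\notin\cH$: either $\Ad(g_0)$ is outer on $\cH$, in which case the twist permutes eigenvalues non-trivially and $\dim Y<\dim V$, or $\Ad(g_0)$ is inner, and one reduces to $g_0\in Z_{\GL_n}(\cH)\setminus\cH$ and distinguishes $\chi(g_0\cH)$ from $\chi(\cH)$ via the constant term (the determinant). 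Carrying out this analysis uniformly over $\Z[1/N]$, so that $f$ has integral coefficients and reduces appropriately modulo $\ell$, is the main technical step, and Proposition \ref{finite} together with Theorem \ref{Hui2}(v) are precisely what supply this uniformity.
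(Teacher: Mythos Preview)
Your overall architecture matches the paper's: assume an infinite set $\mathcal L$ with $\widehat{\uG}_\ell=\uG_\ell$, produce an integral polynomial $f$ in the coefficients of the characteristic polynomial that vanishes on a non-identity component, use Chebotarev to force a positive-density set of Frobenii into the hypersurface $\{\overline{f\circ\chi}=0\}\cap\uG_\ell$, then derive a contradiction from Lang--Weil together with Theorem~\ref{Hui2}(ii) and Lemma~\ref{gpbound}. The numerical endgame is essentially identical.

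The genuine gap is in your construction of $f$. The paper does not attempt to compare $\chi(\bG_\ell^\circ)$ with $\chi(g\bG_\ell^\circ)$ geometrically. Instead it invokes Serre's Lemma~\ref{poly}: from a faithful representation of $\pi_0(\bG_{\ell'})$ one manufactures, via tensor constructions and cyclotomic specialization, a polynomial $f\in\Z[\alpha_1,\dots,\alpha_n]$ with $f(\chi(g\bG_{\ell'}^\circ))=0$ and the single scalar condition $f(\chi(\mathrm{id}))\neq 0$. That last condition is a nonzero \emph{integer}, so for every large $\ell$ its reduction mod $\ell$ is nonzero; since $\mathrm{id}\in\uG_\ell$, this alone guarantees that $\uZ_\ell=\{\overline{f\circ\chi}=0\}\cap\uG_\ell$ is a proper subvariety. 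No model of $\bG_\ell$, and no comparison between $\bG_\ell^\circ$ and $\uG_\ell$ beyond the trivial fact that both contain the identity, is needed. Proposition~\ref{finite} enters only to bound the degree of $\uZ_\ell$ uniformly for the Lang--Weil estimate.

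Your route, by contrast, requires (a) a single $\Z[1/N]$-group $\cH$ modelling \emph{both} $\bG_\ell^\circ$ and $\uG_\ell$, (b) a $\Z[1/N]$-model $\cG$ of the non-connected $\bG_\ell$, and (c) the non-containment $\chi(\cH)\not\subseteq\overline{\chi(\cG_\tau)}$ via a Steinberg/twisted-conjugacy case analysis. Item~(a) is not available: Proposition~\ref{finite} and the remark after it give \emph{separate} finite lists of models, and equality of formal bi-characters (Theorem~\ref{Hui2}(v)) does not force $\bG_\ell^\circ$ and $\uG_\ell$ to share a root datum---that is exactly the open conjecture mentioned after Theorem~\ref{Hui2}. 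If $\cH$ is only a model of $\uG_\ell$, then $\chi(\cH_{\overline\Q})$ need not coincide with $\chi(\bG_{\ell}^\circ)$, and your non-containment argument in characteristic zero has no purchase. Item~(b) is likewise not supplied by the cited results. The paper's device of testing $f$ only at $\chi(\mathrm{id})$ bypasses all of this.
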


\subsection{Proof of Proposition \ref{mainprop}}
	Let $F$ be a field of characteristic zero. 
	For a matrix $g\in\mathrm{GL}_n(F)$, 
	write $\det(TI_n-g)=T^n+\sum_{i=1}^n\alpha_i(g)T^{n-i}$  as the characteristic polynomial of $g$.
	Define the $F$-morphism
	$$\chi:\mathrm{GL}_{n,F}\to\mathbb{A}^{n-1}_F\times\mathbb{G}_{m,F}$$
	that sends $g\in\GL_n(F)$ to the coefficients $(\alpha_1(g),...,\alpha_n(g))$. 
	We present two results of Serre \cite{Se81} 
	that study the components of an algebraic subgroup $G\subset\GL_{n,F}$ 
	(e.g., $\bG_\ell\subset\GL_{n,\Q_\ell}$) via this map.
	
	\begin{lem}(Serre)\label{poly}
		Let $G\subseteq\mathrm{GL}_{n,F}$ be an algebraic subgroup
		and $g\in G(F)\backslash G^{\circ}(F)$. 
		There exists a polynomial $f\in\mathbb{Z}[\alpha_1,...,\alpha_n]$ 
		such that $f(\chi(gG^{\circ}))=0$ and $f(\chi(\mathrm{id}))\not=0$.
	\end{lem}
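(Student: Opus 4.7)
The plan is to produce $f$ by separating $\chi(\mathrm{id})$ from the Zariski closure $Y := \overline{\chi(gG^\circ)}$ in $\A^{n-1}_F \times \G_{m,F}$, then descending to integer coefficients. The cornerstone is the characteristic-zero observation that every unipotent element $u \in G$ lies in $G^\circ$: the nilpotent logarithm $\log u = \sum_{k \geq 1}(-1)^{k+1}(u-1)^k/k$ yields a connected one-parameter unipotent subgroup $\{\exp(t \log u) : t \in \A^1\}$ of $G$ passing through $u$. Since $\chi^{-1}(\chi(\mathrm{id}))$ is exactly the unipotent variety of $\GL_n$, this already gives $\chi(\mathrm{id}) \notin \chi(gG^\circ)$.

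To upgrade this to the Zariski closure, I would first observe via the Jordan decomposition that $\chi(gG^\circ) = \chi((gG^\circ)^{\ss})$, since for $h = h_s h_u \in gG^\circ$ one has $h_u \in G^\circ$ (a unipotent element of $G$) and correspondingly $h_s = h h_u^{-1} \in gG^\circ$. Suppose for contradiction $\chi(\mathrm{id}) \in Y$; by the valuative criterion applied to the dominant morphism $(gG^\circ)^{\ss} \to Y$, there exist a DVR $R$ with fraction field $K$ and a semisimple point $h \in gG^\circ(K)$ whose characteristic polynomial has $R$-integral coefficients specializing modulo $\mathfrak{m}_R$ to $(T-1)^n$. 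The eigenvalues of $h$ lie in the integral closure of $R$ and all reduce to $1$; since $h$ is semisimple, the rigidity of semisimple conjugacy classes under eigenvalue specialization forces $h$ itself to specialize to the identity. Closedness of $gG^\circ$ in $\GL_n$ then places $\mathrm{id} \in gG^\circ$, contradicting $g \notin G^\circ$.

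For the descent to integer coefficients, note that $\chi$ is $\GL_n$-conjugation invariant and defined over $\Z$, so $Y$ depends only on the $\GL_n$-conjugacy class of the coset $gG^\circ$, which is governed by discrete combinatorial invariants (the formal character of $G^\circ$ together with the twisting prescribed by $g$). Consequently $Y$ descends to a closed $\Q$-subvariety of $\A^{n-1}_\Q \times \G_{m,\Q}$ that still avoids the $\Z$-rational point $\chi(\mathrm{id})$, and a defining $\Q$-polynomial cleared of denominators yields the required $f \in \Z[\alpha_1, \ldots, \alpha_n]$. The main obstacle is the closure step: because $gG^\circ$ is only affine (not proper) in $\GL_n$, a $K$-point need not extend to an $R$-point, so one must exploit the extra structure of semisimple elements via the Chevalley adjoint quotient (or direct diagonalization over an unramified extension) to control the specialization and force the limit element back inside $gG^\circ$.
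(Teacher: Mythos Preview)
Your route is genuinely different from the paper's, but it has two gaps that look hard to close without falling back on Serre's construction. The paper does not argue topologically at all: after reducing to $G$ reductive, it chooses a representation $\rho:\pi_0(G)\to\GL_W$ with $\rho(gG^\circ)\neq 1$, realizes $W$ inside a finite sum $\bigoplus T^{a_i,b_i}(V)$, and writes $f$ explicitly as a product of cyclotomic specializations of the characteristic polynomials on these tensor spaces. Integrality of the coefficients is automatic from symmetric functions, and $f(\chi(\mathrm{id}))\neq 0$ because only nontrivial roots of unity are substituted for $T$.

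Your closure step is the first gap, and the obstacle you flag is fatal as stated. A semisimple $h\in\GL_n(K)$ with $\chi(h)\in R^n$ reducing to $\chi(\mathrm{id})$ need not lie in $\GL_n(R)$: for instance $h=\begin{pmatrix}1&\pi^{-1}\\ \pi^3&1\end{pmatrix}$ over a DVR with uniformizer $\pi$ has distinct eigenvalues $1\pm\pi$ and $\chi(h)=(-2,1-\pi^2)$, yet $h\notin\GL_2(R)$, so there is nothing to specialize. Neither of your suggested fixes helps: the Chevalley adjoint quotient for $\GL_n$ \emph{is} the map $\chi$, so it adds no new information, and diagonalizing over an extension controls the eigenvalues but not the conjugating matrix. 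What you would need is some form of properness of $\chi|_{gG^\circ}$ over a neighborhood of $\chi(\mathrm{id})$, and this is exactly what fails.

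The descent step is the second gap. Your claim that $Y$ is governed by discrete invariants and hence defined over $\Q$ is false already for $G=\mu_3\subset\GL_{1}$ over $F=\Q(\zeta_3)$ with $g=\zeta_3$: there $gG^\circ=\{\zeta_3\}$ and $Y=\{-\zeta_3\}$, which is not a $\Q$-point. An integral $f$ still exists (e.g.\ $\alpha_1^2-\alpha_1+1$), but producing one in general amounts to showing that the $\Q$-Zariski closure of $Y$ avoids $\chi(\mathrm{id})$, and you have given no argument for this. The paper's explicit construction bypasses the issue entirely by building $f\in\Z[\alpha_1,\ldots,\alpha_n]$ from the outset.
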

	
\begin{proof}
We present Serre's proof.
Since the unipotent radical of $G$ is connected and $\chi$ is defined using the characteristic polynomial,
we assume $G$ is reductive by semisimplification of $G\subset\GL_{n,F}$. 
		Denote by $\pi_0(G)$ the component group of $G$. Fix a representation $\rho:\pi_0(G)\to\mathrm{GL}_W$ with $\rho(gG^{\circ})\not=1$. Write the same symbol $W$ as the $G$-representation $G\to\pi_0(G)\to\mathrm{GL}_W$ and let $V$ be the faithful representation $G\to\GL_{n,F}$ of $G$. 
	For $a,b\in\Z_{\geq 0}$, denote by $T^{a,b}(V):=(V^{\otimes a})\otimes(V^{\vee,\otimes b})$ the representation of $G$.
		As $G$ is reductive, 	$W$ is a subrepresentation of some $\oplus_{1\le i\le r}T^{a_i,b_i}(V)$ \cite[I. Proposition 3.1(a)]{DMOS82}.
		
		Assume $g$ has generalized eigenvalues $\lambda_1,\lambda_2,...,\lambda_n$ in $V$ counting multiplicity. Then the characteristic polynomial of $g$ in $T^{a,b}(V)$ is:
		\begin{equation}\label{1}
		\prod_{\substack{1\le j_1,...,j_a\le n\\1\le k_1,\cdots,k_b\le n}}\left(T-\frac{\lambda_{j_1}\cdots\lambda_{j_a}}{\lambda_{k_1}\cdots\lambda_{k_b}}\right)
		\end{equation}
		After multiplying \eqref{1} by a high power of $\alpha_n(g)=\prod_{1\le i\le n}\lambda_i$, we obtain an integral polynomial 
		$$P_{a,b}(T,\alpha_1(g),...,\alpha_n(g))\in \Z[T,\alpha_1(g),...,\alpha_n(g)].$$

		As $\pi_0(G)$ is finite and $\rho(g)\not=1$, some eigenvalue of $\rho(g)$ is a $m$th primitive root of unity with $m>1$. By letting $T=\zeta$ run through all the $m$th primitive roots of unity
		and then taking the product, we obtain an integral polynomial:
		$$Q_{a,b}(\alpha_1,...,\alpha_n):=\prod_{\zeta}P_{a,b}(\zeta,\alpha_1,...,\alpha_n)\in\Z[\alpha_1,...,\alpha_n].$$
		
		Finally, we define 
		$$f(\alpha_1,...,\alpha_n):=\prod_{1\le i\le r}Q_{a_i,b_i}(\alpha_1,...,\alpha_n)
		\in \Z[\alpha_1,...,\alpha_n]$$ 
	such that $f(\chi(gG^{\circ}))=0$ by construction. 
	It also follows that $f(\chi(\mathrm{id}))\not=0$ since $\zeta-1\neq 0$.
	\end{proof}
	
The second result below is a consequence of Chebotarev's density theorem.
	
	\begin{prop}\cite[Theorem 3]{Ra98}\label{density}	
		Let $\rho_\ell:\mathrm{Gal}_K\to\mathrm{GL}_n(\mathbb{Q}_{\ell})$ be a semisimple 
		$\ell$-adic Galois representation
		that is unramified outside a finite set of places and let $\bG_\ell\subset\GL_{n,\Q_\ell}$ 
		be the algebraic monodromy group. For $f \in\mathbb{Z}[\alpha_1,...,\alpha_n]$, let $k$ the number of components of $\bG_\ell$ such that $f\circ\chi$ is identically zero. 
		Then the natural density $d(\mathscr S_\ell)$ of 
\begin{equation}\label{S}
\mathscr S_\ell:=\{v\in\Sigma_K:~\rho_\ell~\text{is unramified at }v\text{~and~} 
f\circ \chi(\rho_\ell(\mathrm{Frob}_v))=0\}
\end{equation}
		is $\frac{k}{|\pi_0(\bG_\ell)|}$. 
		In particular, if $\bG_\ell$ is not connected and we take $f$ as in Lemma \ref{poly} ($F=\Q_\ell$), 
		then the natural density $d(\mathscr S_\ell)>0$.
	\end{prop}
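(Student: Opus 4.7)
My plan is to combine Chebotarev's density theorem on the finite quotient $\pi_0(\bG_\ell)$ with an equidistribution statement for Frobenius elements inside each individual component of $\bG_\ell$. The starting observation is that, because $\chi$ records the coefficients of the characteristic polynomial, $f\circ\chi$ is a conjugation-invariant regular function on $\GL_{n,\Q_\ell}$, and hence on $\bG_\ell$. Writing $\bG_\ell=\sqcup_{\sigma\in\pi_0(\bG_\ell)}\bG_\ell^\sigma$ as the union of its irreducible connected components, $f\circ\chi$ either vanishes identically on a component $\bG_\ell^\sigma$ or cuts out a proper Zariski closed subvariety $Z_\sigma\subset\bG_\ell^\sigma$ of strictly smaller dimension. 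Set $\mathcal{C}:=\{\sigma\in\pi_0(\bG_\ell):f\circ\chi\equiv 0\text{ on }\bG_\ell^\sigma\}$, a conjugation-stable subset of $\pi_0(\bG_\ell)$ with $|\mathcal{C}|=k$.

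The lower bound $d(\mathscr S_\ell)\geq k/|\pi_0(\bG_\ell)|$ is then immediate from Chebotarev. By Theorem \ref{Serre1}(i), the composition $\Gal_K\stackrel{\rho_\ell}{\to}\bG_\ell(\Q_\ell)\twoheadrightarrow\pi_0(\bG_\ell)$ factors through $\Gal(K^{\conn}/K)\simeq\pi_0(\bG_\ell)$. Applying the Chebotarev density theorem to the finite Galois extension $K^{\conn}/K$ and to the conjugation-stable subset $\mathcal{C}$, the collection of unramified places $v$ whose Frobenius image lies in $\mathcal{C}$ has natural density $k/|\pi_0(\bG_\ell)|$; for every such $v$, $\rho_\ell(\mathrm{Frob}_v)$ lies in some $\bG_\ell^\sigma$ with $\sigma\in\mathcal{C}$, so $f(\chi(\rho_\ell(\mathrm{Frob}_v)))=0$ automatically.

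The matching upper bound is the main obstacle. I have to show, for each $\sigma\notin\mathcal{C}$, that the set of $v$ with $\rho_\ell(\mathrm{Frob}_v)\in Z_\sigma$ has natural density zero. This does not follow from Chebotarev alone, since $Z_\sigma$ is a proper closed subvariety of the uncountable analytic space $\bG_\ell^\sigma(\Q_\ell)$ while the Frobenius elements form only a countable subset of it. The plan is to invoke the equidistribution theorem of Serre, as recorded in \cite[Theorem 3]{Ra98}: the Frobenius conjugacy classes $\rho_\ell(\mathrm{Frob}_v)$ landing in the coset $\bG_\ell^\sigma$ are equidistributed with respect to the pushforward onto conjugacy classes of a Haar measure on a maximal compact subgroup of $\bG_\ell(\Q_\ell)$, and this pushforward measure assigns mass zero to any proper Zariski closed subvariety. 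Summing over $\sigma\notin\mathcal{C}$ and combining with the lower bound yields the asserted equality $d(\mathscr S_\ell)=k/|\pi_0(\bG_\ell)|$.

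For the ``In particular'' assertion, when $\bG_\ell$ is not connected, Zariski density of $\Gamma_\ell$ in $\bG_\ell$ yields some $g\in\Gamma_\ell\setminus\bG_\ell^\circ(\Q_\ell)$, and Lemma \ref{poly} supplies an $f\in\Z[\alpha_1,\ldots,\alpha_n]$ with $f(\chi(g\bG_\ell^\circ))=0$. Hence $f\circ\chi$ vanishes identically on the non-identity component $g\bG_\ell^\circ$, giving $k\geq 1$ and therefore $d(\mathscr S_\ell)\geq 1/|\pi_0(\bG_\ell)|>0$.
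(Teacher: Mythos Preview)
Your proposal is correct and aligned with the paper's approach: the paper does not give an argument at all for this proposition but simply records it as ``a consequence of Chebotarev's density theorem, see e.g., \cite[Theorem 3]{Ra98}.'' You invoke the same reference and supply the natural outline (Chebotarev on $\pi_0(\bG_\ell)$ for the lower bound, the density-zero statement for proper closed subvarieties for the upper bound), so there is nothing to compare beyond noting that you have written out more than the paper does.

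One small imprecision worth correcting: the equidistribution in Serre's argument (and in Rajan's proof) is with respect to the normalized Haar measure on the compact $\ell$-adic Lie group $\Gamma_\ell=\rho_\ell(\Gal_K)$ itself, not on an abstract maximal compact subgroup of $\bG_\ell(\Q_\ell)$. The point is that $Z_\sigma\cap\Gamma_\ell$ is a proper $\ell$-adic analytic subset of the relevant coset of $\Gamma_\ell^\circ$ and therefore has Haar measure zero; combined with the $\ell$-adic Chebotarev theorem (equidistribution of Frobenii in $\Gamma_\ell$), this gives the density-zero statement you need. With that adjustment your write-up is accurate.
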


 We need an upper bound on the number of rational points from Lang-Weil \cite{LW54}.
		
	\begin{lem}\cite[Lemma 1]{LW54}\label{LW}
		Given integers $n,d\geq 0$ and $r\geq 1$, there exists a positive number $A(n,d,r)$ depending
		only on $n,d,r$ such that for any finite field $\F_q$ and closed (not necessarily irreducible)
		subvariety $X\subset \mathbb{P}_{\F_q}^n$ 
		of degree $d$ and dimension $r$:
		$$|X(\F_q)|\leq A(n,d,r) q^r.$$
	\end{lem}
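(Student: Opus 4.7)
The plan is to proceed by induction on the dimension $r$, using a hyperplane-incidence count combined with a B\'ezout-type degree bound.

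\textbf{Base case.} For $r=0$ (a convenient induction anchor, below the stated range $r\geq 1$), a zero-dimensional projective subvariety of $\mathbb{P}^n_{\F_q}$ of degree $d$ is a finite scheme of length $d$ and therefore supports at most $d$ closed points; hence $|X(\F_q)|\leq d$, and one sets $A(n,d,0):=d$.

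\textbf{Inductive step.} Assume the bound for dimension $r-1$. First reduce to the case that $X$ is pure of dimension $r$ by decomposing $X$ into irreducible components and summing; any component of dimension $<r$ contributes at most $A(n,d,r-1)q^{r-1}$, which is absorbed into the final constant. Further reduce to (geometrically) irreducible $X$. Then count incidences between $X(\F_q)$ and hyperplanes $H\subset\mathbb{P}^n$ defined over $\F_q$:
$$\sum_{H}|(X\cap H)(\F_q)|\;=\;|X(\F_q)|\cdot|\mathbb{P}^{n-1}(\F_q)|,$$
since every $\F_q$-point of $\mathbb{P}^n$ lies on exactly $|\mathbb{P}^{n-1}(\F_q)|$ such hyperplanes. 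Split the sum according to whether $H\supseteq X$: the hyperplanes containing $X$ form a linear subspace of dimension $n-r-1$ in the dual $(\mathbb{P}^n)^\vee$, totaling $|\mathbb{P}^{n-r-1}(\F_q)|$ and each contributing $|X(\F_q)|$; for $H\not\supseteq X$, the intersection $X\cap H$ is pure of dimension $r-1$ and of degree at most $d$ by B\'ezout, so by induction $|(X\cap H)(\F_q)|\leq A(n,d,r-1)q^{r-1}$. Rearranging yields
$$|X(\F_q)|\leq\frac{|\mathbb{P}^n(\F_q)|}{|\mathbb{P}^{n-1}(\F_q)|-|\mathbb{P}^{n-r-1}(\F_q)|}\cdot A(n,d,r-1)q^{r-1}.$$
The denominator equals $q^{n-r}+q^{n-r+1}+\cdots+q^{n-1}\geq q^{n-1}$, while $|\mathbb{P}^n(\F_q)|\leq 2q^n$ for $q\geq 2$, so the ratio is at most $2q$ and $|X(\F_q)|\leq 2A(n,d,r-1)q^r$. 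Taking $A(n,d,r):=2^r d$ (slightly enlarged to absorb the lower-dimensional pieces) closes the induction.

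\textbf{Main obstacle.} The delicate point is the B\'ezout bound $\deg(X\cap H)\leq d$ for hyperplanes not containing $X$, which requires $X$ to be pure-dimensional so that degree is additive under hyperplane section (via the Hilbert polynomial). Handling reducible or non-reduced $X$ in the decomposition step is a secondary bookkeeping task, resolved by working with the underlying cycle. A minor point is ensuring $A(n,d,r)$ is independent of $q$, which the estimate above confirms for $q\geq 2$; finitely many small values of $q$ are absorbed trivially into the constant.
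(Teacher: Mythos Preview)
The paper does not supply its own proof of this lemma; it is simply quoted from \cite[Lemma~1]{LW54}. Your induction-on-dimension via hyperplane incidences is essentially the original Lang--Weil argument, and it is correct. One small imprecision: for irreducible $X$ of dimension $r$, the hyperplanes containing $X$ form a linear subspace of the dual $\mathbb{P}^n$ of dimension $n-1-\dim(\Span X)\leq n-r-1$, not necessarily equal to $n-r-1$; but since you only need an \emph{upper} bound on their number to make the rearranged inequality work, the direction is right and the argument goes through.
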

	
For linear algebraic groups defined over $\F_\ell$, we have good estimates.

\begin{lem}\cite[Lemma 3.5]{No87}\label{gpbound}
Let $A$ be an $r$-dimensional connected linear algebraic group defined over $\F_\ell$. Then
$$(\ell-1)^r\leq |A(\F_\ell)|\leq (\ell+1)^r.$$
\end{lem}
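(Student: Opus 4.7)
The plan is to induct on $r = \dim A$, reducing via Lang's theorem to a few atomic cases for which Steinberg's point-count formula applies. For any short exact sequence $1 \to B \to A \to C \to 1$ of connected linear algebraic $\F_\ell$-groups, Lang's theorem gives $H^1(\F_\ell, B) = 0$, so the sequence is exact on $\F_\ell$-points and $|A(\F_\ell)| = |B(\F_\ell)|\cdot|C(\F_\ell)|$ with $\dim A = \dim B + \dim C$. Applied to the canonical filtration $1 \subset R_u(A) \subset R(A) \subset A$, this reduces the problem to bounding the unipotent radical $R_u(A)$, the torus $T := R(A)/R_u(A)$, and the semisimple quotient $S := A/R(A)$ separately.

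The unipotent case is immediate via the composition series with $\mathbb{G}_a$-quotients, giving $|R_u(A)(\F_\ell)| = \ell^{\dim R_u(A)}$. For the torus $T$ of dimension $d$, Frobenius acts on the geometric character lattice $X^*(T)$ through a finite quotient of the procyclic group $\Gal(\bar{\F}_\ell / \F_\ell)$, so its eigenvalues on $X^*(T)\otimes\C$ are roots of unity $\zeta_i$ and $|T(\F_\ell)| = \prod_{i=1}^d(\ell - \zeta_i)$. Grouping complex conjugate pairs, each contribution $(\ell - \zeta)(\ell - \bar\zeta) = \ell^2 - 2\ell\,\mathrm{Re}(\zeta) + 1$ lies in $[(\ell-1)^2, (\ell+1)^2]$, and real eigenvalues contribute $\ell \pm 1$; the product yields the bound for $T$.

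For the semisimple quotient $S$, I would first pass to the simply connected cover $S^{\mathrm{sc}} \to S$: since the kernel is finite abelian and $\Gal(\bar{\F}_\ell/\F_\ell) \cong \hat{\Z}$ is procyclic, the standard cohomology sequence together with $H^1(\F_\ell, S^{\mathrm{sc}}) = 0$ yields $|S(\F_\ell)| = |S^{\mathrm{sc}}(\F_\ell)|$. Decomposing $S^{\mathrm{sc}}$ as a product of Weil restrictions of almost simple simply connected groups reduces to the almost simple case, where Steinberg's formula gives
$$|S^{\mathrm{sc}}(\F_\ell)| = \ell^N \prod_{i=1}^{\rk S}(\ell^{d_i} - \epsilon_i)$$
with $N$ the number of positive roots, $d_i$ the fundamental degrees, and $|\epsilon_i| = 1$. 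Each factor satisfies $(\ell - 1)^{d_i} \leq |\ell^{d_i} - \epsilon_i| \leq (\ell + 1)^{d_i}$ by binomial expansion, and the identity $\dim S = 2N + \rk S = N + \sum d_i$ combines these bounds into the claimed inequality; the bound is preserved under Weil restriction since $(\ell-1)^k \leq \ell^k - 1$ and $\ell^k + 1 \leq (\ell+1)^k$.

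The main technical point is the uniform verification of the lower bound $(\ell - 1)^{d_i} \leq |\ell^{d_i} - \epsilon_i|$ when $\epsilon_i$ is a nontrivial root of unity (in the twisted Steinberg forms $^2A$, $^2D$, $^2E_6$, $^3D_4$, and the Suzuki-Ree cases for $\ell \in \{2, 3\}$); each case reduces to an elementary polynomial inequality in $\ell$ that holds for $\ell \geq 2$, and is handled case by case from the explicit order formulas.
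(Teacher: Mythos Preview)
The paper does not prove this lemma at all; it is simply quoted from Nori \cite[Lemma~3.5]{No87} and invoked as a black box in the final chain of inequalities. Your proposal therefore supplies an argument where the paper gives none.

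Your outline is essentially correct. The reduction via Lang's theorem to the unipotent, toral, and semisimple pieces is sound, as are the point counts for the first two. For the semisimple part, the equality $|S(\F_\ell)| = |S^{\mathrm{sc}}(\F_\ell)|$ is valid even when $\ell$ divides the order of the isogeny kernel $\mu$ (so that $\mu$ is not \'etale): embed the multiplicative-type group $\mu$ into a torus $T_1$ with quotient torus $T_2$, use $H^1(\F_\ell,T_i)=0$, and observe that $|T_1(\F_\ell)|=|T_2(\F_\ell)|$ because their character lattices coincide after tensoring with $\Q$; this yields $|H^1_{\mathrm{fppf}}(\F_\ell,\mu)|=|\mu(\F_\ell)|$. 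You might make this step explicit, since your phrase ``finite abelian'' suggests you are treating $\mu$ as an \'etale Galois module.

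One correction: the Suzuki and Ree groups $^2B_2$, $^2G_2$, $^2F_4$ do \emph{not} arise here and should be removed from your case list. They are fixed-point groups of exceptional isogenies, not of the $q$-Frobenius attached to an $\F_\ell$-structure, so they never occur as $S(\F_\ell)$ for a connected semisimple algebraic group $S$ defined over $\F_\ell$. Over a finite field every semisimple group is quasi-split, and the only outer forms are the Steinberg types $^2A_n$, $^2D_n$, $^2E_6$, $^3D_4$. For these the $\epsilon_i$ are roots of unity of order at most $3$, and the bound $(\ell-1)^{d}\le |\ell^{d}-\epsilon|\le(\ell+1)^{d}$ follows uniformly from $\ell^d-1\le|\ell^d-\epsilon|\le \ell^d+1$ together with $(\ell-1)^d\le \ell^d-1$ and $\ell^d+1\le(\ell+1)^d$; no case-by-case verification is needed.
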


	We are now ready to prove Proposition \ref{mainprop}.
	
	\begin{proof}
		Fix a rational prime $\ell'$.
		Since $\bG_{\ell'}$ is not connected, pick a  place $\bar v$ of $\overline K$ (lying above $v\in\Sigma_K\backslash S$) 
		such that $\rho_{\ell'}$ is unramified 
		and $g:=\rho_{\ell'}(Frob_{\bar v})$ does not belong to $\bG_{\ell'}^\circ$.
		By Lemma \ref{poly}, there is a polynomial $f\in\Z[\alpha_1,...,\alpha_n]$
		such that $f(\chi(g\bG_{\ell'}^\circ))=0$
		and $f(\chi(\mathrm{id}))\neq 0$.
		Let $\mathscr S_\ell\subset\Sigma_K$ be the subset defined in \eqref{S}.
		
		\begin{prop}\label{denindep}
		The natural density $d(\mathscr S_\ell)$ is independent of $\ell$ and is equal to a constant $C_1>0$.
		\end{prop}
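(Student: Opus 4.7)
The plan is to exploit Serre compatibility to show that, away from a density-zero set of places, membership in $\mathscr S_\ell$ depends only on the compatible-system data $P_v(T)$ rather than on $\ell$, and then to invoke Proposition \ref{density} at the one distinguished prime $\ell'$ to obtain positivity.

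First I would note that for every $v\in\Sigma_K\backslash(S\cup S_\ell)$, the compatibility hypothesis says that $\det(TI_n-\rho_\ell(\mathrm{Frob}_v))=P_v(T)\in\Q[T]$, a polynomial independent of $\ell$. Writing $P_v(T)=T^n+\sum_{i=1}^n a_i(v)T^{n-i}$ with $a_i(v)\in\Q$, we have $\alpha_i(\rho_\ell(\mathrm{Frob}_v))=a_i(v)$, so the tuple $\chi(\rho_\ell(\mathrm{Frob}_v))\in\Q^{n-1}\times\Q^\times$ is independent of $\ell$. Because $f\in\Z[\alpha_1,\dots,\alpha_n]$, the value $f(\chi(\rho_\ell(\mathrm{Frob}_v)))\in\Q$ is likewise independent of $\ell$, and its vanishing is an intrinsic property of $v$ alone.

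Consequently, if we define
\[
\mathscr S:=\{v\in\Sigma_K\setminus S:\ f(a_1(v),\dots,a_n(v))=0\},
\]
then $\mathscr S_\ell$ and $\mathscr S$ agree outside the finite set $S_\ell$. Since $S_\ell$ is finite it has natural density zero, and therefore $d(\mathscr S_\ell)=d(\mathscr S)$ exists and is the same for every $\ell$; call this common value $C_1$. This is the independence part of the proposition.

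For positivity, I would specialize to $\ell=\ell'$ and apply Proposition \ref{density}. By construction $f(\chi(g\bG_{\ell'}^\circ))=0$, so the coset $g\bG_{\ell'}^\circ$ is one of the connected components of $\bG_{\ell'}$ on which $f\circ\chi$ vanishes identically; hence the integer $k$ appearing in Proposition \ref{density} satisfies $k\geq 1$, giving
\[
C_1=d(\mathscr S_{\ell'})=\frac{k}{|\pi_0(\bG_{\ell'})|}>0.
\]
There is no real obstacle in this step; the only point that requires care is verifying that $\chi(\rho_\ell(\mathrm{Frob}_v))$ is genuinely $\ell$-independent (which is immediate from compatibility once one observes that the integer polynomial $f$ is evaluated at rational numbers, so its vanishing in $\Q_\ell$ is equivalent to its vanishing in $\Q$) and that the bad set $S\cup S_\ell$ does not affect natural density.
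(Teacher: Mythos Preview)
Your proof is correct and follows essentially the same approach as the paper: you use compatibility of the system to see that $f(\chi(\rho_\ell(\mathrm{Frob}_v)))$ depends only on $P_v(T)$ and hence not on $\ell$, and you invoke Proposition~\ref{density} at $\ell'$ (where $f\circ\chi$ vanishes on the coset $g\bG_{\ell'}^\circ$) to obtain positivity. The paper's proof is the same argument, just stated more tersely.
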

		
		\begin{proof}
		By Proposition \ref{density},
		the natural density $d(\mathscr S_{\ell'})$ is equal to a constant $C_1>0$.
		Since $\{\rho_\ell\}_\ell$ is a compatible system and $f(\chi(\rho_\ell(Frob_v)))$
		depends only on the characteristic polynomial of $\rho_\ell(Frob_v)$,
		the natural density $d(\mathscr S_\ell)$ is independent of $\ell$.
		\end{proof}
		
	Assume on the contrary there is an infinite set $\mathcal L$ of rational primes such that 
the full algebraic envelope
$\widehat\uG_\ell$ is connected (i.e., $\widehat\uG_\ell=\uG_\ell$)
for all $\ell\in\mathcal L$.
	For $\ell\in\mathcal L$, define $\overline{f\circ\chi}:\GL_{n,\F_\ell}\to \A_{\F_\ell}$
	as the mod $\ell$ reduction of the $\Z$-morphism $f\circ\chi$ and the $\F_\ell$-subvariety
	\begin{equation}\label{Zell}
	\uZ_\ell:=\uG_\ell\cap \{\overline{f\circ\chi}=0\}\subset\uG_\ell.
	\end{equation}
		
 \begin{prop}\label{bound}
There is a constant $C_2>0$ such that 
$|\uZ_{\ell}(\F_{\ell})|\le C_2 \ell^{\dim(\uG_{\ell})-1}$
for all sufficiently large $\ell\in\mathcal L$. 
\end{prop}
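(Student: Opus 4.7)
The plan is to show that $\uZ_\ell$ is a \emph{proper} closed subvariety of the irreducible variety $\uG_\ell$, and then to invoke Lang--Weil (Lemma \ref{LW}) with geometric constants made uniform in $\ell$ via Proposition \ref{finite}. The heart of the argument is to separate two things: (a) the nonvanishing of $\overline{f\circ\chi}$ on $\uG_\ell$, which comes from the defining property $f(\chi(\mathrm{id}))\neq 0$ in Lemma \ref{poly}; and (b) the uniform boundedness of the degrees of $\uG_\ell$ and $\uZ_\ell$, which is where Proposition \ref{finite} enters.

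First I would argue properness. By Lemma \ref{poly} the integer $f(\chi(\mathrm{id}))$ is nonzero, so for all primes $\ell$ larger than $|f(\chi(\mathrm{id}))|$ the reduction $\overline{f\circ\chi}(\mathrm{id})\in\F_\ell$ is nonzero. Since $\mathrm{id}\in\uG_\ell(\F_\ell)$, this shows that the regular function $\overline{f\circ\chi}|_{\uG_\ell}$ is not identically zero. The algebraic envelope $\uG_\ell$ is connected (Theorem \ref{Hui2}), hence irreducible as a variety, so
\[
\dim \uZ_\ell \;\leq\; \dim\uG_\ell - 1
\]
for all sufficiently large $\ell\in\mathcal L$.

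Next I would extract uniform geometric constants from Proposition \ref{finite}. That proposition supplies finitely many split connected reductive $\Z[1/N]$-subgroup schemes $G_1,\dots,G_m\subset\GL_{n,\Z[1/N]}$ such that, for every sufficiently large $\ell$, the geometric fiber $\uG_{\ell,\overline{\F_\ell}}$ is $\GL_n(\overline{\F_\ell})$-conjugate to some $G_{i,\overline{\F_\ell}}$. Embedding $\GL_n\subset\A^{n^2}\subset\P^{n^2}$ as usual and taking projective closures, both $\dim\uG_\ell$ and the projective degree of $\overline{\uG_\ell}$ lie in finite sets of integers independent of $\ell$; let $D_1$ be a uniform upper bound for the latter. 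Since $f\in\Z[\alpha_1,\dots,\alpha_n]$ is a fixed polynomial and $\chi$ is a fixed morphism, the hypersurface $\{\overline{f\circ\chi}=0\}$ has degree bounded by some constant $D_2$ independent of $\ell$. By a standard intersection-theoretic estimate (Bezout), the projective closure $\overline{\uZ_\ell}$ has degree at most $D_1D_2$ as a subvariety of $\P^{n^2}$.

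Finally I would apply Lemma \ref{LW} to $\overline{\uZ_\ell}\subset\P^{n^2}$, with ambient dimension $n^2$, degree $\leq D_1D_2$, and dimension $r_\ell:=\dim\uZ_\ell\leq \dim\uG_\ell-1$. This yields
\[
|\uZ_\ell(\F_\ell)| \;\leq\; |\overline{\uZ_\ell}(\F_\ell)| \;\leq\; A(n^2,D_1D_2,r_\ell)\,\ell^{\,r_\ell} \;\leq\; A(n^2,D_1D_2,r_\ell)\,\ell^{\,\dim\uG_\ell-1}.
\]
Since $r_\ell$ takes only finitely many values, taking $C_2$ to be the maximum of the relevant Lang--Weil constants gives the desired uniform bound. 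I expect the main subtlety to be not the properness step (which is a one-line consequence of Lemma \ref{poly} once $\ell\nmid f(\chi(\mathrm{id}))$), but the insistence on a uniform degree bound for $\uG_\ell$: a priori $\uG_\ell$ lives naturally only over $\F_\ell$, and without Proposition \ref{finite} one would have no way to control its degree as $\ell$ varies. That proposition is precisely the tool that makes Lang--Weil produce a constant $C_2$ independent of $\ell$.
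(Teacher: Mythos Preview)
Your argument is correct and follows essentially the same route as the paper: show $\overline{f\circ\chi}$ does not vanish identically on $\uG_\ell$ using $f(\chi(\mathrm{id}))\neq 0$, use Proposition \ref{finite} to bound the degree of (the closure of) $\uZ_\ell$ uniformly, then apply Lemma \ref{LW}. The only cosmetic difference is in how the uniform degree bound is obtained: the paper exploits that $f\circ\chi$ is conjugation-invariant, so $\uZ_{\ell,\overline\F_\ell}$ is itself $\GL_n(\overline\F_\ell)$-conjugate to the $\overline\F_\ell$-fiber of the fixed $\Z[1/N]$-scheme $G_i\cap\{f\circ\chi=0\}$, whence its degree is visibly bounded; you instead first bound the degree of $\overline{\uG_\ell}$ (using that $\GL_n$-conjugation is linear on $\A^{n^2}$ and hence degree-preserving on $\P^{n^2}$) and then invoke B\'ezout with the fixed-degree hypersurface $\{\overline{f\circ\chi}=0\}$. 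Both are equally valid.
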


\begin{proof}
By Proposition \ref{denindep}, the intersection $\bar\Gamma_\ell\cap \uZ_\ell\neq \emptyset$.
Since $\overline{f\circ\chi}(\mathrm{id})\neq 0$ for $\ell\gg0$,
the subvariety $\uZ_\ell\subset\uG_\ell$ is a hypersurface  for $\ell\gg0$.
Since $\GL_{n,\F_\ell}$ is an open subscheme of the projective space $\mathbb{P}_{\F_\ell}^{n^2}$,
we denote by $\uZ_\ell'$ the Zariski closure of $\uZ_\ell$ in $\mathbb{P}_{\F_\ell}^{n^2}$.
The assertion holds by Lemma \ref{LW} (Lang-Weil) if we can show that 
the degree of $\uZ_\ell'$ (or $\uZ_{\ell,\overline\F_\ell}'$) 
is uniformly bounded independent of $\ell\in\mathcal L$.

By Proposition \ref{finite}, there is a finite set $\{G_i\subset\GL_{n,\Z[1/N]}:~1\leq i\leq m\}$
of connected split reductive subgroup subschemes  
such that $\uG_{\ell,\overline\F_\ell}$
is conjugate to some $G_{i,\overline\F_\ell}$ in $\GL_{n,\overline\F_\ell}$ 
for all $\ell\gg0$. It suffices to consider the case $m=1$. 
Since $\overline{f\circ\chi}$ is conjugation-invariant, the base change
$\uZ_{\ell,\overline\F_\ell}$ of \eqref{Zell} and 
\begin{equation}\label{new}
(G_{1}\cap \{f\circ\chi=0\})_{\overline\F_\ell}=G_{1,\overline\F_\ell}\cap \{\overline{f\circ\chi}=0\}_{\overline\F_\ell}
\end{equation}
are conjugate in $\GL_{n,\overline\F_\ell}$.
Since the degree of the Zariski closure of \eqref{new} in $\mathbb{P}_{\overline\F_\ell}^{n^2}$
is uniformly bounded 
independent of $\ell\gg0$, this is also true for the degree of $\uZ_{\ell,\overline\F_\ell}'$ for 
all $\ell\in\mathcal L$.
\end{proof}

We need two more estimates. For all $\ell\in\mathcal L$, we have $\bar\Gamma_\ell\subset\uG_\ell(\F_\ell)$.
Moreover, there is a constant $C_3>0$ such that 
 		\begin{equation}\label{5}
			|\uG_\ell(\F_\ell)|\leq C_3 |\bar\Gamma_\ell|\hspace{.2in}\forall \ell\in\mathcal L
		\end{equation}
		by Theorem \ref{Hui2}(ii). Define
$$\bar{\mathscr S_\ell}:=\{v\in\Sigma_K:~\bar\rho_\ell^{\ss}~\text{is unramified at }v\text{~and~} 
\overline{f\circ \chi}(\bar\rho_\ell^{\ss}(\mathrm{Frob}_v))=0\}.$$
Since  $\mathscr S_\ell\subset  \bar{\mathscr S_\ell}$, Proposition \ref{denindep} implies that
		\begin{equation}\label{4}
			\frac{|\uZ_{\ell}(\mathbb{F}_{\ell})\cap\bar\Gamma_{\ell}|}{|\bar\Gamma_{\ell}|}
			=d(\bar{\mathscr S_\ell})\ge d(\mathscr S_\ell)=C_1>0\hspace{.2in}\forall \ell\in\mathcal L.
		\end{equation}

By combining all the estimates, we obtain the following inequalities

		\begin{align*}
		\begin{split}
			C_2\ell^{\dim(\uG_\ell)-1}\stackrel{Prop. \ref{bound}}{\ge}|\uZ(\mathbb{F}_{\ell})|\ge|\uZ(\mathbb{F}_{\ell})\cap\bar\Gamma_{\ell}|
			\stackrel{\eqref{4}}{\ge} C_1|\bar\Gamma_{\ell}|\stackrel{\eqref{5}}{\ge} \frac{C_1}{C_3}|\uG_\ell(\mathbb{F}_{\ell})|\stackrel{Lem. \ref{gpbound}}{\ge} 
			\frac{C_1}{C_3}(\ell-1)^{\dim(\uG_\ell)}
		\end{split}
		\end{align*}
		for all sufficiently large $\ell\in\mathcal L$,
		which is absurd as $\mathcal L$ is infinite. 
		This finishes the proof of Proposition \ref{mainprop}.
	\end{proof}

\section*{Acknowledgments}
The authors thank Davide Lombardo for his interest and comments on the article.
We also appreciate the referee for their careful review and constructive feedback. Additionally, Hui received partial support from the University Grants Committee of Hong Kong, China (Grant no. 17314522), and was sponsored by a Humboldt Research Fellowship from the Alexander von Humboldt Foundation, Germany

\end{document}